\newtheorem{theorem}{Theorem}[section]
\newtheorem{construction}[theorem]{Construction}
\newcommand{\Aut}{{\rm Aut}}
\newcommand{\DW}{{\rm DW}}
\newcommand{\PS}{{\rm PS}}
\newcommand{\CS}{{\rm CS}}
\newcommand{\MPS}{{\rm MPS}}
\newcommand{\BC}{{\rm BC}}
\newcommand{\W}{{\rm W}}
\newcommand{\SDD}{{\rm SDD}}
\newcommand{\SoP}{{\rm SoP}}
\newcommand{\SSS}{{\rm SS}}
\newcommand{\Br}{{\rm Br}}
\newcommand{\Prr}{{\rm Pr}}
\newcommand{\LoPr}{{\rm LoPr}}
\newcommand{\KE}{{\rm KE}}
\newcommand{\Curtain}{{\rm Curtain}}
\newcommand{\WH}{{\rm WH}}
\newcommand{\MSY}{{\rm MSY}}
\newcommand{\MSZ}{{\rm MSZ}}
\newcommand{\CPM}{{\rm CPM}}
\newcommand{\DG}{{\rm DG}}
\newcommand{\RW}{{\rm R}}
\newcommand{\TAG}{{\rm TAG}}
\newcommand{\HC}{{\rm HC}}
\newcommand{\MC}{{\rm MC}}
\newcommand{\AMC}{{\rm AMC}}
\newcommand{\AffLR}{{\rm AffLR}}
\newcommand{\Att}{{\rm Att}}
\newcommand{\AT}{{\rm AT}}
\newcommand{\HT}{{\rm HT}}
\newcommand{\PC}{{\rm PX}}
\newcommand{\PX}{{\rm PX}}
\newcommand{\MG}{{\rm MG}}
\newcommand{\PPM}{{\rm PPM}}
\newcommand{\XI}{{\rm XI}}
\newcommand{\ProjLR}{{\rm ProjLR}}
\newcommand{\BGCG}{{\rm BGCG}}
\renewcommand{\mod}{\hbox{{\rm mod}}\, }
\newcommand{\Cos}{\hbox{{\rm Cos}}}
\newcommand{\ZZ}{\mathbb{Z}}
\newcommand{\PP}{\mathbb{P}}
\newcommand{\Cay}{\hbox{{\rm Cay}}}
\newcommand{\D}{{\mathcal{D}}}
\newcommand{\C}{{\mathcal{C}}}
\newcommand{\A}{{\mathcal{A}}}
\newcommand{\R}{{\mathcal{R}}}
\newcommand{\G}{{\mathcal{G}}}
\newcommand{\M}{{\mathcal{M}}}
\newcommand{\V}{{\mathcal{V}}}
\newcommand{\E}{{\mathcal{E}}}
\newcommand{\half}{\frac{1}{2}}
\begin{document}

\begin{center}
{\bf\large 
Recipes for Edge-Transitive Tetravalent Graphs
}
\end{center}

\bigskip\noindent
\begin{center}
{\sc Steve Wilson} \\
{\small Department of Mathematics and Statistics,}\\
{\small  Northern Arizona University,}\\
{\small Box 5717, Flagstaff, AZ 86011, USA}\\
{\tt stephen.wilson@nau.edu}\\
\end{center}

\bigskip\noindent
\begin{center}
{\sc Primo\v z Poto\v cnik\footnote{%
This author gratefully acknowledges the
    support of the US Department of State and
    the Fulbright Scholar Program who sponsored his visit
    to Northern Arizona University in spring 2004.}} \\
{\small University of Ljubljana,}\\
{\small Faculty of Mathematics and Physics}\\
{\small Jadranska 19, SI-1000 Ljubljana, Slovenia;}\\
{\tt primoz.potocnik@fmf.uni-lj.si}\\
\end{center}
\bigskip

\vfill

\noindent
Proposed running head: {\bf Tetravalent Recipes}

\bigskip
edit date: 14 August 2016

print date: \today
\bigskip

\noindent
The correspondence should be addressed to:

\medskip

\noindent
{\sc Steve Wilson} \\
{\small Department of Mathematics and Statistics,}\\
{\small  Northern Arizona University,}\\
{\small Box 5717, Flagstaff, AZ 86011, USA}\\
{\tt stephen.wilson@nau.edu}\\

\vfill

\hbox{ }

{\bf Key words:}
graph,
automorphism group,
symmetry,
\bigskip

\newpage


\section{The Census}
\label{sc:Census}

This paper is to accompany the Census of Edge-Transitive Tetravalent Graphs, available at
\begin{center}
	 \href{http://jan.ucc.nau.edu/~swilson/C4FullSite/index.html}{http://jan.ucc.nau.edu/$\sim$swilson/C4FullSite/index.html},
	\end{center}
which is a collection of all known edge-transitive graphs of valence 4 up to  512 vertices.  

	The Census contains information for each graph. This information includes parameters such as group order, diameter, girth etc., all known constructions, relations to other graphs in the Census and intersting substructures such as colorings, cycle structures, and dissections.
 
We try to present most graphs  as members of one or more parameterized families, and one purpose of this paper is to gather together, here in one place, descriptions of each of these families, to show how each is constructed, what the history of each is and how one family is related to another.  We also discuss in this paper the theory and techniques behind computer searches leading to many entries in the Census.

	We should point out that similar censi exist for edge transitive graphs of valence 3 \cite{MC3,CMMP}. 
Unlike our census, these censi are complete in the sense that they contain all the graphs up to a given order. The method used
in these papers relies on the fact that in the case of prime valence, the order of the automorphism group can be bounded
be a linear function of the order of the graph, making exhaustive computer searches possible -- see \cite{ConDob} for details.

Even though our census is not proved to be complete, it is complete in some segments. In particular,
the census contains all dart-transitive tetravalent graphs up to 512 vertices (see \cite{CubicCensus,CubicCensusSite})
and all $\frac{1}{2}$-arc-transitive tetravalent  graphs up to 512 vertices (see \cite{HATcensus,HATcensusSite}). Therefore, if a
graph is missing from our census, then it is semisymmetric (see Section~\ref{sc:defs}).

\section{Basic Notions}
\label{sc:defs}

	A {\em graph} is an ordered pair $\Gamma = (\V, \E)$, where $\V$ is an arbitrary set of things called vertices, and $\E$ is a collection of subsets of $\V$ of size two; these are called {\em edges}.  We let $\V(\Gamma) = \V$ and $\E(\Gamma) = \E$ in this case. If $e = \{u, v\} \in \E$, we say $u$ is a {\em neighbor} of  $v$, that $u$ and $v$ are {\em adjacent}, and that $u$ is {\em incident} with $e$ and vice versa.  A {\em dart} or {\em directed edge} is an ordered pair $(u,v)$ where $\{u, v\} \in \E$.  Let $\D(\Gamma)$ be the set of darts of $\Gamma$. The {\em valence} or {\em degree} of a vertex $v$ is the number of edges to which $v$ belongs.  A graph is {\em regular} provided that every vertex has the same valence, and then we refer to that as the valence of the graph.

A {\em digraph} is an ordered pair $\Delta = (\V, \E)$, where $\V$ is an arbitrary set of things called vertices, and $\E$ is a collection of ordered pairs of distinct elements of $\V$.  We think of the pair $(u,v)$ as being an edge directed from $u$ to $v$.  An {\em orientation} is a digraph in which for all $u, v \in\V$, if $(u,v)\in \E$ then $(v,u)\notin \E$.

	A {\em symmetry}, or  {\em automorphism}, of a graph or a digraph $\Gamma$ is a permutation of $\V$ which preserves $\E$.
 If $v\in\V(\Gamma)$ and $g$ is a symmetry of
$\Gamma$, then we denote the image of $v$ under $\sigma$ by $v\sigma$, and if $\rho$ is also a symmetry of $\Gamma$,
 then the product $\sigma$ is a symmetry that maps $v$ to
$(v\sigma)\rho$.
 Together with this product, the set of symmetries of $\Gamma$ forms a group, denoted $\Aut(\Gamma)$.  We are interested in those graphs for which 
$G = \Aut(\Gamma)$ is big enough to be transitive on $\E$.  Such a graph is called  {\em edge-transitive}.  Within the class of edge-transitive graphs of a given valence, there are three varieties:  

\begin{enumerate}
\item[(1)]  A graph is  {\em symmetric} or  {\em dart-transitive} provided that $G$ is transitive on $\D=\D(\Gamma)$.
\item[(2)] A graph is  {\em $\frac{1}{2}$-arc-transitive} provided that $G$ is transitive on $\E$ and on $\V$, but not on $\D$.  A $\frac{1}{2}$-arc-transitive graph must have even valence \cite{tutte}. The $G$-orbit of one dart is then an orientation $\Delta$ of $\Gamma$ such that every vertex has $k$ in-neighbours and $k$ out-neighbors, where $2k$ is the valence of $\Gamma$.  The symmetry group $\Aut(\Delta)$ is then transitive on vertices and on edges.  We call such a $\Delta$  a {\em semitransitive orientation} and we say that a graph which has such an orientation is {\em semitransitive} \cite{STG}.  
\item[(3)] Finally, $\Gamma$ is  {\em semisymmetric} provided that $G$ is transitive on $\E$ but not on $\D$ and not on $\V$.  In this case, the graph must be bipartite, with each edge having one vertex from each class.
More generally, we say that a  graph $\Gamma$ is {\em bi-transitive} provided that $\Gamma$ is bipartite, and its group of color-preserving symmetries is transitive on edges (and so on vertices of each color);
a bi-transitive graph is thus either semisymmetric of dart-transitive.
\end{enumerate}

	There is a fourth important kind of symmetricity that a tetravalent graph might have, in which $\Aut(\Gamma)$ is transitive on vertices but has two orbits on edges, and satisfying certain other conditions.  These are {\em LR structures}, introduced and defined in Section \ref{sc:Cycle Decompositions} but referred to in several places before that.  These graphs, though not edge-transitive themselves, can be used directly to construct semisymmetric graphs.

	For $\sigma\in\Aut(\Gamma)$, if there is a vertex $v\in\V$ such that $v\sigma$ is adjacent to $v$ and  $v\sigma^2\ne v$
we call $\sigma$ a {\em shunt} and
  then $\sigma$ induces a directed cycle $[v, v\sigma, v\sigma^2, \ldots, v\sigma^m]$, with $v\sigma^{m+1} = v$, called a {\em consistent} cycle.
 The remarkable theorem of Biggs and Conway \cite{BC,ConCyc} says that if  $\Gamma$  is dart-transitive and  regular of degree $d$, then there are exactly $d-1$ orbits of consistent cycles.  A later result \cite{ConCycHAT} shows that a  $\frac{1}{2}$-arc-transitive graph of degree $2e$ must have exactly $e$ orbits of consistent cycles.  For tetravalent graphs, the dart-transitive ones have 3 orbits of consistant cycles and the  $\frac{1}{2}$-arc-transitive ones have two such orbits.  An LR structure has 1 or 2 orbits depending on whether it is self-dual or not.

\section{Computer generated lists of graphs}
\label{sc:comp}
\subsection{Semisymmetric graphs arising from amalgams of index $(4,4)$}

Let $L$ and $R$ be two finite groups intersecting in a common subgroup $B$ and assume that no non-trivial subgroup of $B$ is normal in both $L$ and $R$.
Then the triple $(L,B,R)$ is called an {\em amalgam}. For example, if we let $L=A_4$, the alternating group of degree $4$, $B \cong C_3$, viewed as a point-stabiliser in 
$L$, and $R\cong C_{12}$ containing $B$ as a subgroup of index $4$, then $(L,B,R)$ is an amalgam.

If $G$ is a group that contains both $L$ and $R$ and is generated by them, then $G$ is called a {\em completion}
of the amalgam. It is not too difficult to see that there exists a completion that is universal in the sense that every other
completion is isomorphic to a quotient  thereof. This universal completion is sometimes called the {\em free product of 
$L$ and $R$ amalgamated over $B$} (usually denoted by $L*_BR$) and can be constructed by merging together 
(disjoint) presentations of $L$ and $R$ and adding relations that identify copies of the same element of $B$ in both $L$ and $R$.
For example, if the amalgam $(L,B,R)$ is as above, then we can write 
$$
L=\langle x,y,b|x^2,y^2,[x,y],b^3,x^by,y^bxy\rangle, \quad R=\langle z|z^{12} \rangle,
$$
yielding
$$
L*_B R = \langle x,y,b,z|x^2,y^2,[x,y],b^3,z^{12},x^by,y^bxy, z^4=b \rangle.
$$
Completions of a given amalgam $(L,B,R)$ up to a given order, say $M$, can be computed using a {\sc LowIndexNormalSubgroups} routine,
developed by Firth and Holt \cite{FH} and  implemented in {\sc Magma} \cite{magma}.

Given a completion $G$ of an amalgam $(L,B,R)$, one can construct a bipartite graph, called {\em the graph of the completion},
with white and black vertices being the cosets of $L$ and $R$ in $G$, respectively, and two cosets $Lg$ and $Rh$ adjacent whenever they intersect. 
Note that  white (black) vertices are of valence $[B:L|]$  ($[B:R]$, respectively). In particular, if $B$ is of index $4$ in both $L$ and $R$, then the graph is 
tetravalent. We shall say in this case that the amalgam is of index $(4,4)$.

The group $G$ acts by right multiplication faithfully  as an edge-transitive group of automorphism of the graph and so the graph of a completion of an amalgam 
always admits an edge- but not vertex-transitive group of automorphisms, and so the graph is bi-transitive 
(and thus either dart-transitive or semisymmetric). 
This now gives us a good strategy for constructing tetravalent semisymmetric graphs of order at most $M$:
 Choose your favorite  amalgams $(L,B,R)$ of index $(4,4)$, find their completions up to order $2M|B|$ and construct the corresponding graphs.

We have done this for several amalgams of index $(4,4)$ and the resulting graphs appear in the census under the name $\SSS$.
  The graph $\SSS[n,i]$ is the $i$-th graph in the list of semisymmetric graphs of order $n$. These graphs are available in magma code at \cite{HATcensusSite}.

\subsection{Dart-transitive graphs from amalgams}

If $\Gamma$ is a tetravalent dart-transitive graph, then its {\em subdivision}, obtained from $\Gamma$ by inserting
a vertex of valence $2$ on each edge, is edge-transitive but not vertex-transitive. This process is reversible,
by removing vertices of degree $2$ in a bi-transitive graph of valence $(4,2)$ one obtains a tetravalent dart-transitive graph.

In the spirit of the previous section,
each such graph can be obtained from an amalgam of index $(4,2)$. Amalgams of index $(4,2)$ were fully classified in
\cite{djo,AT2,weiss}, however, unlike in the case of amalgams of index $(3,2)$, giving rise to cubic dart-transitive graphs,
the number of these amalgams is infinite. This fact, together with existence of relatively small
tetravalent dart-transitive graphs with very large automorphism groups  (see Section~\ref{sec:PXC}) made the straightforward
approach used in \cite{ConDob,MC3} in the case of cubic graphs impossible in the case of valence $4$. This obstacle
was finally overcome in \cite{lost} and now a complete list of dart-transitive tetravalent graphs of order up $640$
is described in \cite{CubicCensus} and available in magma code at \cite{HATcensusSite}. 
 We use $\AT[n,i]$ for these graphs to indicate the  $i$-th graph of order $n$ in  this magma file.

\subsection{Semi-transitive graphs from universal groups}

Every semi-transitive tetravalent graph arises from an infinite 
$4$-valent tree $T_4$ and a group $G$ acting on $T_4$ semi-transitively  and having a finite stabiliser
by quotienting out a normal semiregular subgroup of $G$. All such groups $G$ were determined in
 \cite{MarNed}. This result in principle enables the same approach as used in the case of tetravalent dart-transitive graphs,
 and indeed, by overcoming the issue of semi-transitive graphs with large automorphism groups (see \cite{genlost}),
 a complete list of semi-transitive tetravalent graphs (and in particular $\frac{1}{2}$-arc-transitive graphs) was obtained in
 \cite{HATcensus} and is available at \cite{HATcensusSite}.  We include graphs from this census with the designation $\HT[n,i]$.

\vskip 1cm
We now begin to show the notation and details of each construction used for graphs in the Census.

\section{Wreaths, unworthy graphs}
\label{sc:Wreaths}

A general {\em Wreath graph}, denoted $\W(n, k)$,
has $n$ bunches of $k$ vertices each, arranged in a circle; every vertex of bunch $i$ is adjacent to every vertex in bunches  $i+1$ and $i-1$.   More precisely, its vertex set is $\ZZ_n\times\ZZ_k$; edges are all pairs of the form $\{(i,r), (i+1, s)\}$. The graph $\W(n,k)$ is regular of degree $2k$.  If $n = 4$, then $\W(n, k)$ is isomorphic to $K_{2k, 2k}$ and its symmetry group is the semidirect product of  $S_{2k}\times S_{2k}$ with $\ZZ_2$.  If $n \neq 4$, then its symmetry group is the semidirect product of $S_k^n$ with $D_n$; this group is often called the {\em wreath product} of $D_n$ over $S_k$.

Those of degree 4 are the graphs $\W(n, 2)$.  Here, for simplicity,  we can notate the vertex $(i,0)$ as $A_i$, and $(i,1)$ as $ B_i$ for $i \in \ZZ_n$, with edges $\{A_i, A_{i+1}\} , \{A_i, B_{i+1}\} , \{B_i, A_{i+1}\} , \{B_i, B_{i+1}\}$.  For example, Figure \ref{fig:W72} shows $W(7,2)$.

\begin{figure}[hhh]
\begin{center}
\epsfig{file=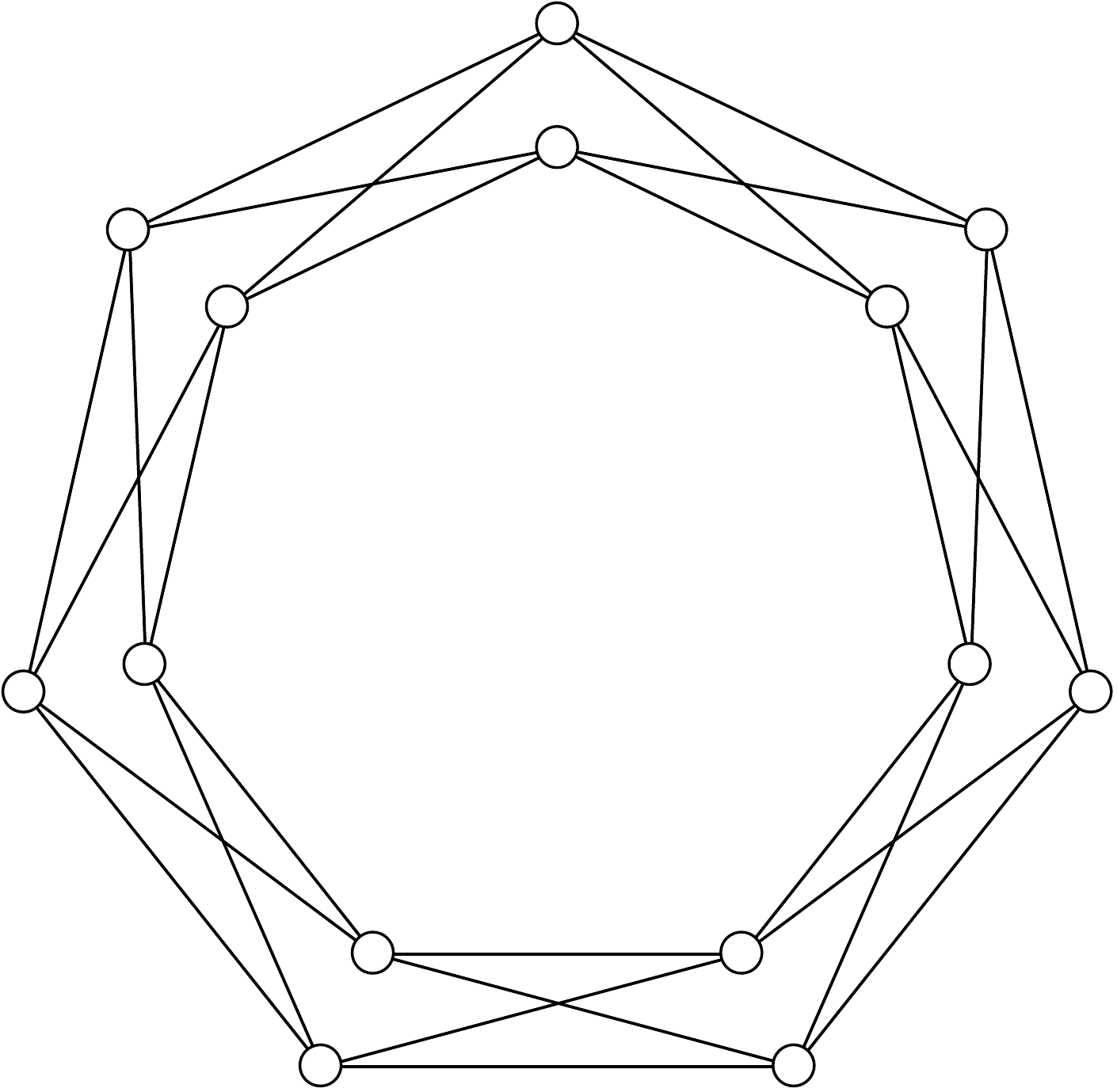,height=45mm}
\caption{$W(7,2)$}
\label{fig:W72}
\end{center}
\end{figure}

A wreath graph $\W(n, 2)$ has dihedral symmetries $\rho$ and $\mu$, where $\rho$ sends $(i,j)$ to $(i+1,j)$ and $\mu$ sends $(i, j)$ to $(-i,j)$.  The important aspect of this graph is that for each $i \in \ZZ_n$, there is a symmetry $\sigma_i$, called a ``local'' symmetry,  which interchanges $(i,0)$ with $(i, 1)$ and leaves every other vertex fixed.   Notice that $\rho$ acts as a shunt for a cycle of length $n$, and $\rho\sigma_0$ acts as a shunt for a cycle of length $2n$.  The third orbit of consistent cycles are those of the form $[A_i, A_{i+1}, B_i, B_{i+1}]$ 
(and their reverses).
The symmetry $\rho\mu\sigma_0$ is a shunt for one cycle in this orbit.

Since every $\sigma_i$ for $i \neq 0$ is in the stabilizer of $A_0$, 
we see that vertex stabilizers in these graphs can be arbitrarily large.

	A graph $\Gamma$ is {\em unworthy} provided that some two of its vertices have exactly the same neighbors.  The graph $W(n, 2)$ is unworthy because for each  $i$, the vertices $A_i$ and $B_i$ have the same neighbors.  The symmetry groups of vertex-transitive unworthy graphs tend to be large due to the symmetries that fix all but two vertices sharing the same neighborhood.

The paper \cite{g34} shows that there are only two kinds of tetravalent edge-transitive graphs which are unworthy.  One is the dart-transitive $\W(n,2)$ graphs.  The other is the ``sub-divided double" of a dart-transitive graph; this is a semisymmetric graph given by this construction:
	
\begin{construction}
 	Suppose that $\Lambda$ is  a tetravalent graph.  We construct a bipartite graph $\Gamma = \SDD(\Lambda)$ in the following way.  The white vertices of $\Gamma$ correspond to edges of $\Lambda$.  The black vertices correspond two-to-one to vertices of $\Lambda$; for each $v \in \V(\Lambda)$, there are two vertices $v_0, v_1$ in $\V(\Gamma)$.  An edge of $\Gamma$ joins each $e$ to each $v_i$ where $v$ is a vertex of $e$ in $\Lambda$.
 \end{construction}
 
 	The Folkman graph on 20 vertices \cite{F} is constructible as $\SDD(K_5)$.  It is clear that $\SDD(\Lambda)$ is tetravalent, and that if $\Lambda$ is dart-transitive, then  $\SDD(\Lambda)$ is edge-transitive.

	The paper \cite{g34} shows that any unworthy edge-transitive graph is isomorphic to some $\W(n, 2)$ if it is dart-transitive, and to
$\SDD(\Lambda)$ for some dart-transitive $\Lambda$ if it is semisymmetric.  There are no unworthy $\half$-transitive graphs.
	
\section{Circulants}
\label{sc:Circulants}

	In general, the {\em circulant} graph $C_n(S)$ is the Cayley graph for $\ZZ_n$  with generating set $S$.  Here $S$ must be a subset of $\ZZ_n$ which does not include 0, but does, for each $x \in S$, include $-x$ as well.  Explicitly,  vertices are $0, 1, 2, \dots, n-1$, considered as elements of  $\ZZ_n$, and two numbers $i, j$ are adjacent if their difference is in $S$.  Thus the edge set consists of all pairs $\{i, i+s\}$ for $i \in \ZZ_n$, and $s \in S$.   If $S = \{\pm a_1, \pm a_2, \dots\}$, we usually abbreviate the  name of each graph as $C_n(a_1, a_2, \dots)$. The numbers $a_i$ are called {\em jumps} and the set of edges of the form  $\{j, j+a_i\}$ for a fixed $i$ is called a {\em jumpset}.
Figure \ref{fig:C1013} shows an example, the graph $C_{10}(1, 3)$.

\begin{figure}[hhh]
\begin{center}
\epsfig{file=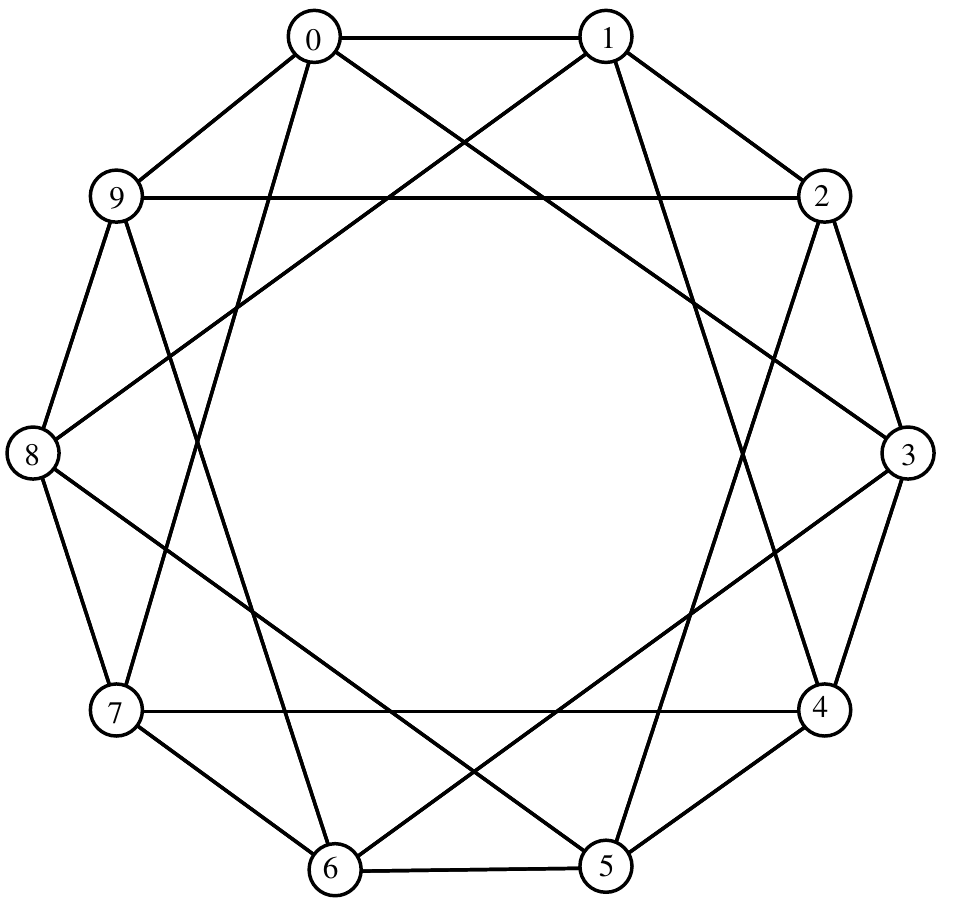,height=45mm}
\caption{$C_{10}(1,3)$}
\label{fig:C1013}
\end{center}
\end{figure}

	For general $n$, if $S$ is a subgroup of the group $\ZZ_n^*$ of units mod $n$, 
then $C_n(S)$ is dart-transitive, though it may be so in many other circumstances as well.  
In the tetravalent case, there are two possibilities:
	
\begin{theorem}
\label{th:Circ}
If $\Gamma$ is a tetravalent edge-transitive circulant graph with $n$ vertices, then
it is dart-transitive and either:
\begin{itemize}
\item[{\rm (1)}] $\Gamma$ is isomorphic to $C_n(1, a)$ for some $a$ such that $a^2 \equiv \pm 1 (\mod n)$, or
\item[{\rm (2)}]
$n$ is even, $n = 2m$, and $\Gamma$ is isomorphic to $C_{2m}(1, m+1)$.
\end{itemize}
\end{theorem}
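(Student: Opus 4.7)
\medskip\noindent\textbf{Proof plan.}

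\emph{Dart-transitivity.} The map $\nu\colon x \mapsto -x$ is an automorphism of every circulant since $S = -S$; it fixes the vertex $0$ and reverses every edge at $0$. Combined with the regular translation action of $\ZZ_n$ on vertices and the hypothesis of edge-transitivity, this produces an automorphism that reverses any prescribed edge, so $\Gamma$ is dart-transitive.

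\emph{Reduction to $C_n(1,c)$.} Write $S = \{\pm a, \pm b\}$ with $a \ne \pm b$. A cycle-structure/invariant argument on the two $\ZZ_n$-orbits of edges (e.g.\ counting triangles through an edge, or comparing the cycle lengths of the spanning 2-regular subgraphs cut out by each jumpset) shows that in the edge-transitive case $\gcd(a,n) = \gcd(b,n)$; combined with connectedness (which we may assume by passing to a component), this common value equals $1$. Hence both $a$ and $b$ are units modulo $n$, and multiplying by $a^{-1}$ yields an isomorphism $\Gamma \cong C_n(1, c)$ with $c = a^{-1}b \in \ZZ_n^*$ and $c \ne \pm 1$.

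\emph{Main dichotomy.} Edge-transitivity supplies an automorphism $\phi$ fixing $0$ with $\phi(\{\pm 1\}) = \{\pm c\}$. If $\phi$ can be chosen to be a \emph{multiplicative} automorphism $x \mapsto ux$, then $u \in \{\pm c\}$; applying $\phi$ twice sends $1 \mapsto u \mapsto u^2 \in \{\pm 1\}$, forcing $c^2 \equiv \pm 1 \pmod n$, which is case~(1). The remaining possibility is that the jumpset swap can only be realised by a non-affine automorphism. I would then argue that this forces $\Gamma$ to be unworthy: the existence of a non-multiplicative automorphism exchanging the two jumpsets of a tetravalent circulant is structurally tied to a wreath-type decomposition in which some pair of vertices share a common neighborhood. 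Once $\Gamma$ is unworthy and edge-transitive, the classification cited from~\cite{g34} identifies $\Gamma \cong W(m,2)$ for some $m$, and the identification $(i,r) \leftrightarrow i + rm \in \ZZ_{2m}$ gives $W(m, 2) \cong C_{2m}(1, m+1)$, placing $\Gamma$ in case~(2).

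The main obstacle is the last step, namely ruling out non-affine edge-transitive symmetries in the worthy case. In general, circulants can admit exotic non-affine automorphism groups --- this underlies the failure of the Ad\'am/CI property --- so the argument must either appeal to an existing classification of edge-transitive Cayley graphs over $\ZZ_n$, or exploit the tetravalence directly: for instance by analysing the action of the stabiliser of $0$ on the four neighbors $\{\pm 1, \pm c\}$ together with the limited cycle-structure of the two jumpsets to conclude that any such non-affine example must be a wreath.
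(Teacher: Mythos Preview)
Your reduction step contains a genuine error. You claim that edge-transitivity forces $\gcd(a,n)=\gcd(b,n)$, but this is false precisely in case~(2): for odd $m\ge 3$ the graph $C_{2m}(1,m+1)\cong \W(m,2)$ is edge-transitive, yet $\gcd(1,2m)=1$ while $\gcd(m+1,2m)=2$. The invariant you propose---cycle lengths of the spanning $2$-factor cut out by each jumpset---does distinguish the two jumpsets here (a single $2m$-cycle versus two $m$-cycles), but that is exactly the problem: these jumpsets are \emph{not} automorphism-invariant. Edge-transitivity only guarantees an automorphism carrying one edge to another, not one jumpset to another as a whole; in $\W(m,2)$ the local swaps $\sigma_i$ thoroughly mix the two jumpsets. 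Consequently your conclusion that both jumps are units, and hence that $c=a^{-1}b$ is a unit, collapses, and the rest of the dichotomy argument (which presupposes $c\in\ZZ_n^*$) never gets started.

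For comparison, the paper does not attempt a self-contained proof of necessity at all: after checking dart-transitivity (via $i\mapsto a-i$, essentially your argument) and verifying that (1) and (2) are sufficient, it simply invokes the full classification of arc-transitive circulants due to Kov\'acs and Li \cite{CircAT1,CircAT2}, with a remark that one could alternatively proceed ``by careful examination of short cycles''. So your instinct that a direct combinatorial argument should exist is reasonable, but the one you sketched does not survive the wreath examples; any correct direct argument must detect the unworthy case \emph{before} attempting to normalise the connection set.
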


\begin{proof}
Note first that every edge-transitive circulant $C_n(a,b)$ is dart-transitive, due
to an automorphism which maps a vertex $i$ to the vertex $a-i$ mod $n$, and thus inverts the
edge $\{0,a\}$.

In (1), the dihedral group $D_n$ acts transitively on darts of each of the two jumpsets, and if $a^2 \equiv \pm 1\> (\mod n)$, then the multiplication by $a$ mod $n$ induces a symmetry of the graph which interchanges the two jumpsets.

On the other hand,  in (2), $C_{2m}(1, m+1)$ is isomorphic to the unworthy graph $\W(m, 2)$, with $i$ and $i+m$ playing the roles of $A_i$ and $B_i$.  Thus, the sufficiency of  (1) or (2) for edge-transitivity is clear.

The necessity can be deduced either from a complete classification of dart-transitive circulants of arbitrary valence proved in \cite{CircAT1} and \cite{CircAT2} or by careful examination of short cycles in dart-transitive circulants.
\end{proof}

\section{Toroidal Graphs}
\label{sc:Toroidal Graphs}

The tessellation of the plane into squares is known by its Schl\"afli symbol, $\{4, 4\}$.  Let $T$ be the group of translations
of the plane that preserve the tessellation. Then $T$ is isomorphic to $\ZZ\times \ZZ$ and acts transitively on the vertices of the
tessellation.  If $U$ is a subgroup of finite index in  $T$, then $\M = \{4, 4\}/U$ is a finite map  of type $\{4, 4\}$ on the torus, and every such map arises in this way.   A symmetry $\alpha$ of  $\{4, 4\}$ acts as a symmetry of $\M$ if and only if $\alpha$ normalizes $U$.  Thus every such $\M$ has its symmetry group $\Aut(\M)$ transitive on vertices, on horizontal edges, on vertical edges;  further for each edge $e$ of $\M$, there is a symmetry reversing the edge (and acting as a $180^\circ$ rotation about its center).  Thus, $\Aut(\M)$ is transitive on the edges of $\M$ (and so must be dart-transitive) if and only if $U$ is normalized by some $90^\circ$ rotation or by some reflection about some axis at a $45^\circ$ angle to the axes.  As shown in \cite{STW}, this can happen in three different ways:

\begin{itemize}
\item[{\rm (1)}]
$\{4, 4\}_{b, c}$:   For this graph and map, $U$ is the group generated by the translations $(b, c)$ and $(-c, b)$.   These are the well-known {\em rotary} maps.  $\{4, 4\}_{b, c}$ has $D = b^2+c^2$ vertices, $D$ faces and $2D$ edges.  Figure \ref{fig:4432} shows the case when $b=3, c=2$.

\begin{figure}[hhh]
\begin{center}
\epsfig{file=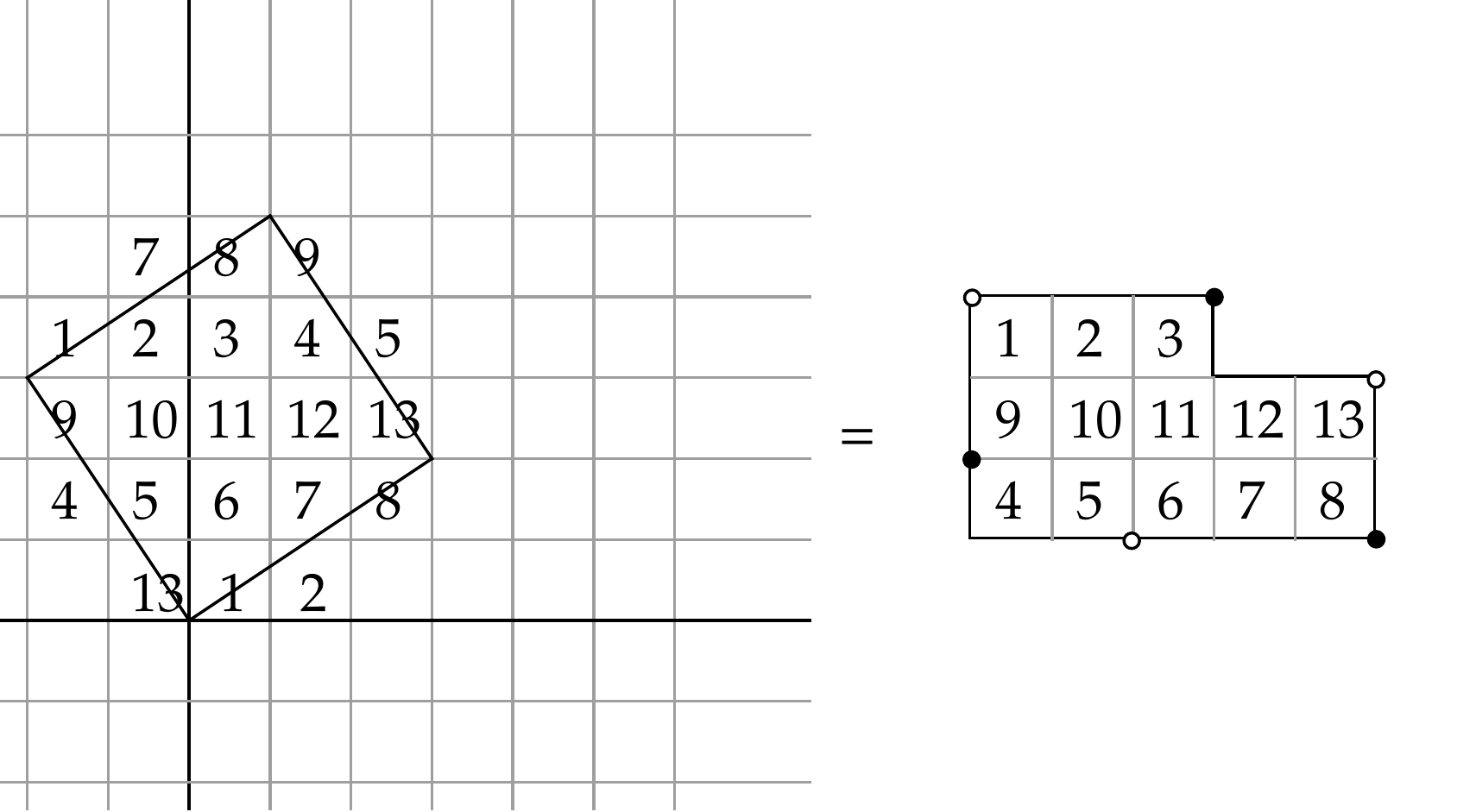,height=40mm}
\caption{The map $\{4, 4\}_{3,2}$}
\label{fig:4432}
\end{center}
\end{figure}

\item[{\rm (2)}]	$\{4, 4\}_{<b, c>}$:   This graph and map, defined for $ b-1 > c \ge 0$, uses for $U$ the group generated by the translations $(b, c)$ and $(c, b)$.  It has $E = b^2-c^2$ vertices, $E$ faces and $2E$ edges.  Figure \ref{fig:44<31>} shows the  map $\{4, 4\}_{<3,1>}$.

\begin{figure}[hhh]
\begin{center}
\epsfig{file=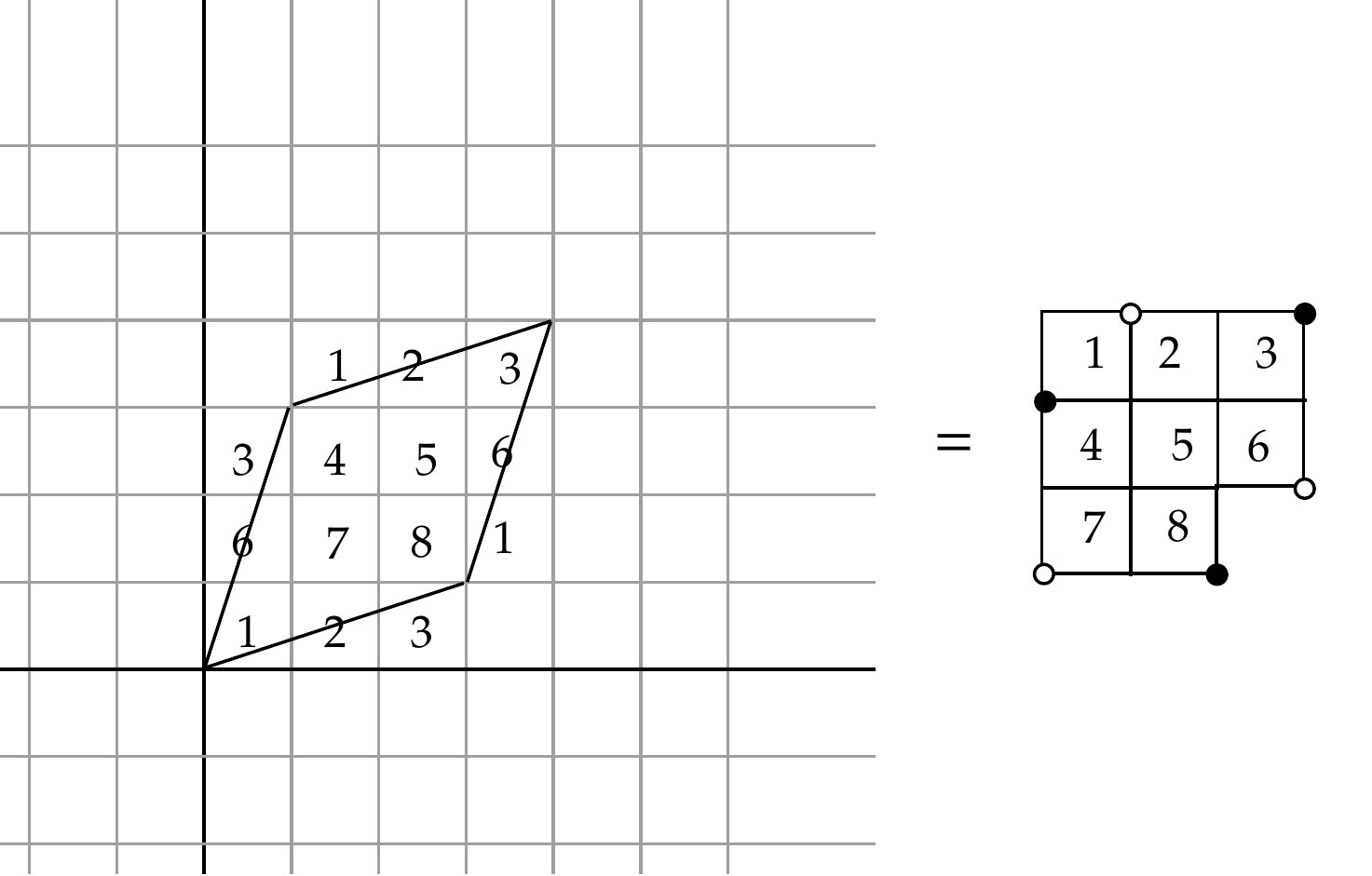,height=40mm}
\caption{The map $\{4, 4\}_{<3,1>}$}
\label{fig:44<31>}
\end{center}
\end{figure}

	Notice that we exclude the case $b = c+1$.  The {\em map} $\{4, 4\}_{<c+1, c>}$ exists, but its underlying multigraph has parallel edges.

\item[{\rm (3)}]	$\{4, 4\}_{[b, c]}$:   For this graph and map, $U$ is the group generated by the translations $(b, b)$ and $(-c, c)$.  It is defined only for $b \ge c > 1$. It has $F =2bc$ vertices, $F$ faces and $2F$ edges.

\begin{figure}[hhh]
\begin{center}
\epsfig{file=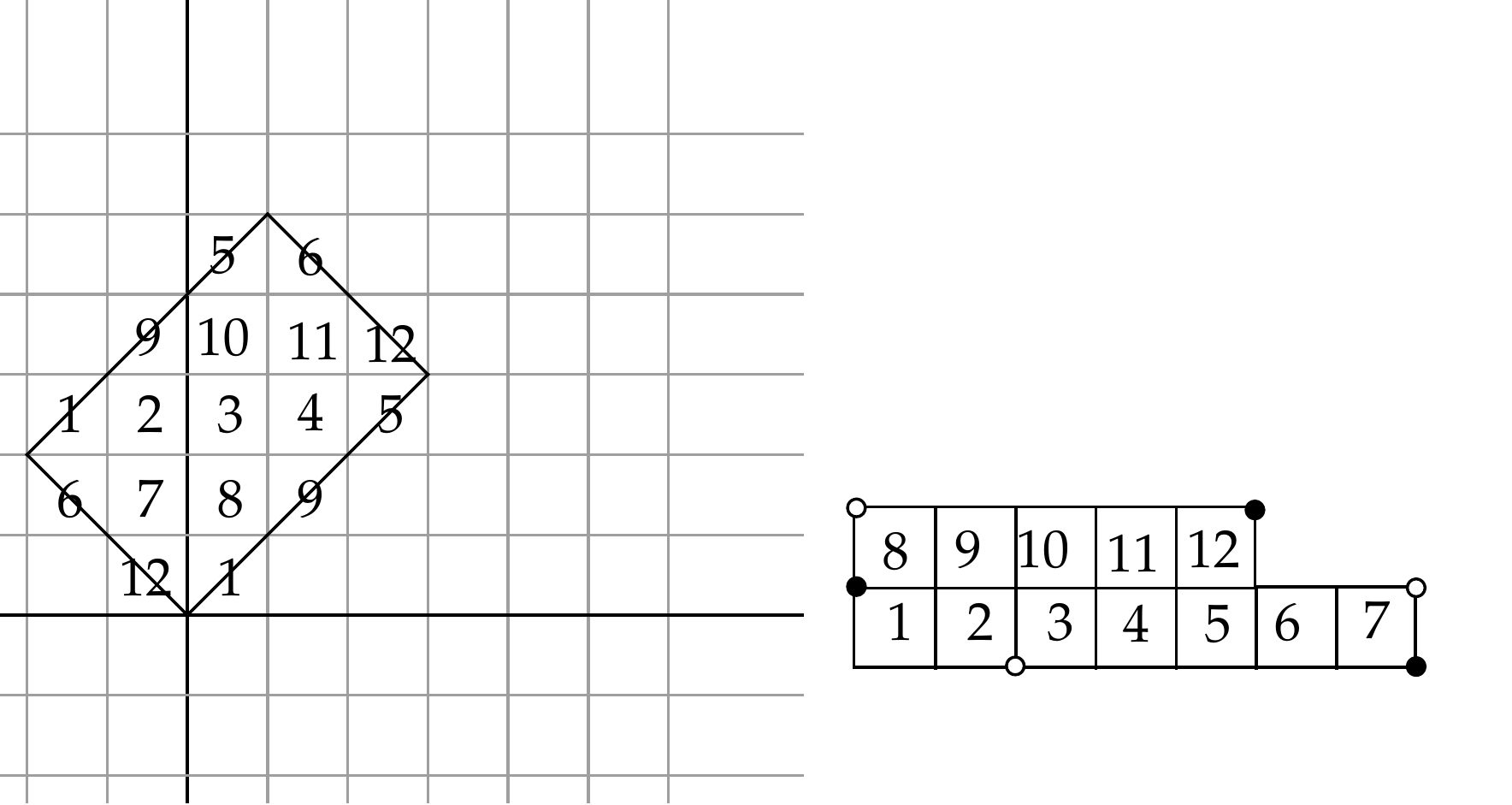,height=35mm}
\caption{The map $\{4, 4\}_{[3,2]}$}
\label{fig:44[32]}
\end{center}
\end{figure}

If $c = 1$, then the map $\{4, 4\}_{[b, c]}$ exists, but, again,  its skeleton has multiple (parallel) edges and so is not a simple graph.
\end{itemize}

Because $\{4, 4\}_{b, 0}$ is isomorphic to $\{4, 4\}_{<b, 0>}$, this map is reflexible.  Because $\{4, 4\}_{b, b}$ is isomorphic to $\{4, 4\}_{[b, b]}$, this map is also reflexible.  All other  $\{4, 4\}_{b, c}$ are  {\em chiral}:  i.e., rotary but not reflexible.

	We will use the symbols $\{4, 4\}_{b, c}$, $\{4, 4\}_{[b, c]}$, $\{4, 4\}_{<b, c>}$ to stand for the graphs which are skeletons of these maps as well as for the maps themselves.  It is surprising that  $\{4, 4\}_{b+c, b-c}$ is a double cover of $\{4, 4\}_{b, c}$, while $\{4, 4\}_{[b+c, b-c]}$ is a double cover of $\{4, 4\}_{<b, c>}$ and    $\{4, 4\}_{<b+c, b-c>}$ is  a double cover of $\{4, 4\}_{[b, c]}$.

Now, every $U$ of finite index can be expressed in the form $U = \langle(d, e), (f, g)\rangle$, where $(d, e)$ and $(f, g)$ are linearly independent.  In particular, it  is not hard to show that $U$ can also be expressed in the form $U = \langle(r, 0), (s, t)\rangle$, where $t = {\rm GCD}(e, g)$.  Let $e = e't, g = g't$, and let $m$ and $n$ be Bezout multipliers, so that $ me'+ng' =1$.  Then $s = md + nf$ and $r = g'd - e'f$.  This gives a fundamental region which is a rectangle $r$ squares wide, $t$ squares high, with the left and right edges identified directly, and the bottom edges identified with the top after a shift $s$ squares to the right, as in Figure \ref{fig:44form}.

\begin{figure}[hhh]
\begin{center}
\epsfig{file=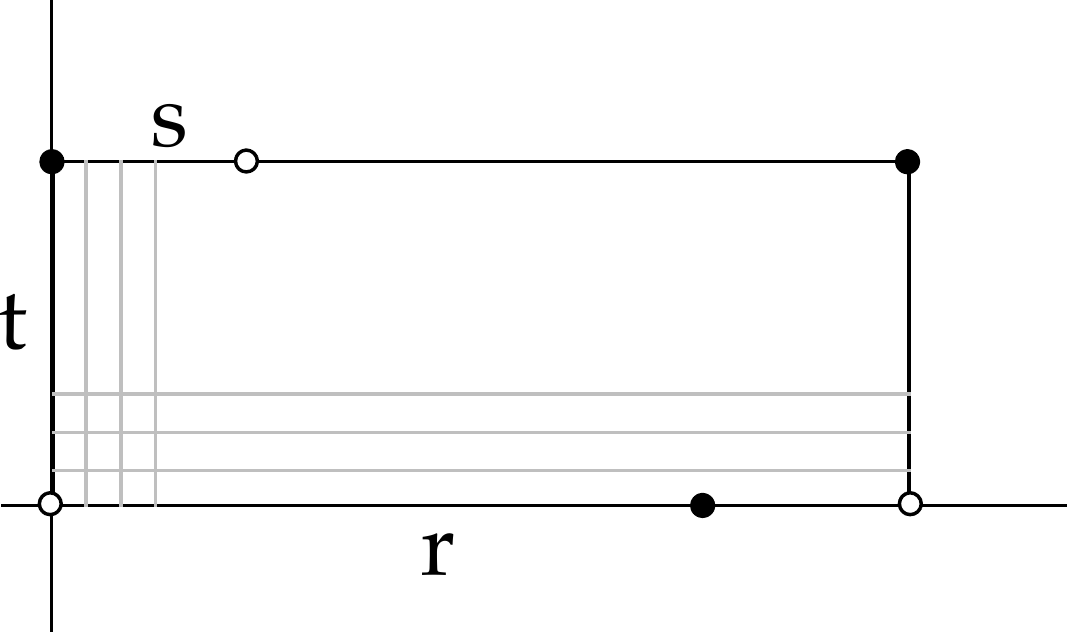,height=35mm}
\caption{A standard form for maps of type $\{4, 4\}$}
\label{fig:44form}
\end{center}
\end{figure}

In the special case in which $b$ and $c$ are relatively prime, we have $t = 1$, and this forces the graph to be circulant; in fact, the circulant graph $C_r(1, s)$.   Here, $s^2 \equiv -1$ for  $\{4, 4\}_{b, c}$, and $s^2 \equiv 1$ for both   $\{4, 4\}_{<b, c>}$ and    $\{4, 4\}_{[b, c]}$.  On the other hand, every circulant graph is toroidal or has an embedding on the Klein bottle.  More precisely, if $a^2\equiv -1$(mod $n$), then  $C_n(1,a) \cong \{4, 4\}_{b, c}$ for some $b, c$.  If $a^2\equiv 1$(mod $n$), then $C_n(1,a) \cong \{4, 4\}_{<b, c>}$ of $\{4, 4\}_{[b, c]}$for some $b, c$.
Of the graphs $C_{2m}(1, m+1)\cong \W(m, 2)$, if $m$ is even then then it is $\{4, 4\}_{[\frac{m}{2}, 2]}$.  On the other hand,  if $m$ is odd, it has an embedding on the Klein bottle, though that embedding is not edge-transitive \cite{maps}.

\section{Depleted Wreaths}
\label{sc:Depleted Wreaths}

The {\em general Depleted Wreath graph} $\DW(n, k)$ is formed from $\W(n, k)$ by removing $k$ disjoint $n$-cycles, each of these cycles containing one vertex from each of the $k$ bunches.  More precisely, its vertex set is $\ZZ_n\times\ZZ_k$.   Its edge set is the set of all pairs of the form $\{(i,r), (i+1, s)\}$ for $i \in \ZZ_n$ and $r, s \in \ZZ_k$, $r\ne s$.  Its vertices are of degree $2(k-1)$.  It is tetravalent when $k = 3$. Figure \ref{fig:DW} shows part of $\DW(n, 3)$.

\begin{figure}[hhh]
\begin{center}
\epsfig{file=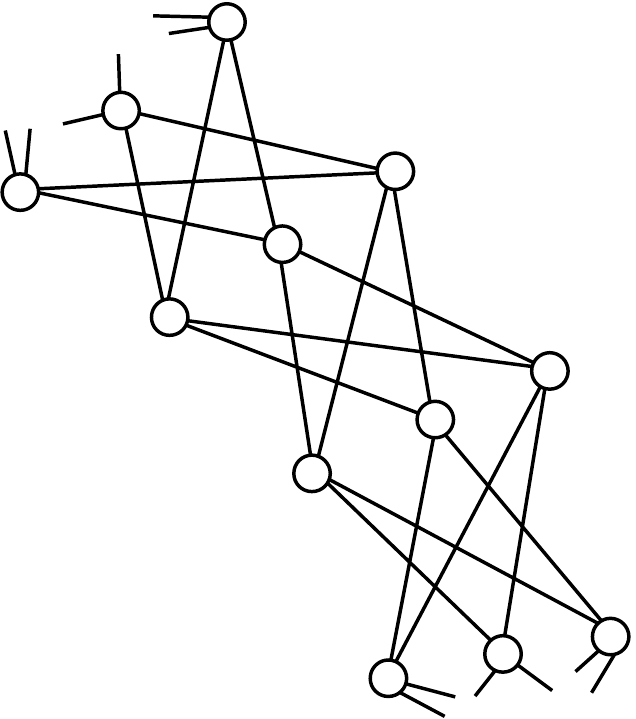,height=35mm}
\caption{Part of $\DW(n, 3)$}
\label{fig:DW}
\end{center}
\end{figure}

If $n \equiv 1$ (mod 3) then $\DW(n, 3) \cong C_{3n}(1, n+1)$ and if $n \equiv 2$ (mod 3) then $\DW(n, 3) \cong C_{3n}(1, n-1)$; in the remaining case, $n \equiv 0$ (mod 3), $\DW(n, 3)$ is not a circulant.

The Depleted Wreaths are also toroidal.  If $n$ is even, then $\DW(n, 3)\cong \{4,4\}_{[\frac{n}{2}, 3]}$, while if $n$ is odd, then  $\DW(n, 3)\cong \{4,4\}_{<\frac{n+3}{2}, \frac{n-3}{2}>}$

\section{Spidergraphs}
\label{sc:Spidergraphs}

	The {\em Power Spidergraph} $\PS(k, n; r)$ is defined for $k\ge 3, n \ge 5,$ and $r$ such that $r^k \equiv \pm 1$ (mod $n$), but  $r \not \equiv \pm 1$ (mod $n$).  Its vertex set is $\ZZ_k \times\ZZ_n$, and vertex $(i,j)$ is connected by edges to vertices $(i+1, j \pm r^i)$.  It may happen that this graph is not connected; if so, re-assign the name to the connected component containing $(0, 0)$.  The resulting graph is always semitransitive.  Directing each edge from  $(i,j)$  to $(i+1, j \pm r^i)$ gives a semi-transitive orientation.
	
		Closely related is the  {\em Mutant Power Spidergraph} $\MPS(k, n; r)$.  It is defined for $k\ge 3$, $n$ even and $n \ge 8,$ and $r$ such that $r^k \equiv \pm 1$ (mod $n$), but  $r \not \equiv \pm 1$ (mod $n$).  Its vertex set is $\ZZ_k \times\ZZ_n$.  For $0\le i < k-1$, vertex $(i,j)$ is connected by edges to vertices $(i+1, j \pm r^i)$; vertex $(k-1, j)$ is connected to $(0, j \pm  r^{k-1} + n/2)$.  Maru\v si\v c  \cite{M1} and \v Sparl \cite{S1} have shown that every tetravalent tightly-attached graph is isomorphic to some $\PS$ or $\MPS$ graph, and that the graph is $\frac{1}{2}$-arc-transitive in all but a few cases: if $r^2 \equiv \pm 1$ (mod $n$), then the graph is dart-transitive.  The very special graph  $\Sigma = \PS(3, 7; 2)$  is dart-transitive.   If $m$ is an integer not divisible by 7 and $r$ is the unique  solution mod $n = 7m$ to $r \equiv 5$ (mod 7), $r \equiv 1$ (mod $m$), then $\PS(6, n; r)$ (which is a covering of $\Sigma$) is dart-transitive.

		 The paper  \cite{S1} defines and notates these graphs in ways which differ from this paper, and the difference is worthy of note.  If $m, n, r, t$ are integers satisfying (1) $m, n $  are even and at least 4, (2)  $r^m \equiv 1$ (mod $n$) and (3)  $s = 1 + r + r^2 + \dots r^{m-1} +2t$ is equivalent to 0 (mod $n$), then \cite{S1} defines the graph $X_e(m,n;r,t)$ (``e" stands for ``even") to have vertices  $[i,j]$ with $i \in \ZZ_m$ and $j \in \ZZ_n$ and edges from $[i, j]$ to $[i+1,j]$ and $[i+1,j+r^i]$ when $0 \le i <m-1$, while $[m-1, j]$ is connected to $[0, j+t]$ and $[0, j+r^{m-1} + t]$. 
		 
	The argument in \cite{STG} shows that if  (1), (2) and (3) hold, then $r^m \equiv 1$ (mod $2n$).    Then it is not hard to see that if  the integer $s$ is equivalent to 0 mod $2n$, then  $X_e(m,n;r,t)$ is isomorphic to $\PS(m, 2n;r)$; if $s$ is equivalent to $n$ mod $2n$, then it is isomorphic to $\MPS(m, 2n;r)$.

\section{Attebery Graphs}
\label{sc:Attebery}

The following quite general construction is due to  Casey Attebery \cite{CAthesis}.   We define the graph 
$\Att(A, T, k;a, b)$  to be underlying graph of the digraph $\Att[A, T, k;a, b]$.   The parameters are: 
an abelian group $A$, an automorphism 
$T$ of $A$, an integer $k>2$,  and two elements $a$ and $b$ of $A$.  
Let $c = b-a$ and define $a_i, b_i, c_i$ to 
be $aT^i, bT^i, cT^i$ respectively for $i = 0, 1, 2, \dots, k$.  We require that:
\begin{itemize}
\item[{\rm (1)}]
 $\{a_k, b_k\} = \{a, b\}$, 
 \item[{\rm (2)}]
 $A$ is  generated by $c_0, c_1, c_2, \dots, c_{k-1}$ and $\Sigma_{i = 0}^{k-1}a_i$; and 
 \item[{\rm (3)}]
  $a+b$ is in the kernel of the endomorphism $T^* = \Sigma_{i = 0}^{k-1}T^i$.
 \end{itemize} 
Then the vertex set  of the digraph $\Att[A, T, k;a, b]$ and of the graph $\Att(A, T, k;a, b)$ is defined to be 
$A\times\ZZ_k$.  In the digraph, edges lead 
from each $(x, i)$ to $(x+a_i, i+1)$ and $(x+b_i, i+1)$.  Then the digraph is a semi-transitive orientation 
of the graph.  The graph is thus semi-transitive, and it is often, but not always, $\frac{1}{2}-$arc transitive.

Not all Attebery graphs are implemented in the Census.  There are four special cases which are:
\begin{enumerate}
\item   If $A=\ZZ_n$ and $T$ is multiplication by $r$, the Attebery graph is just $\PS(k, n; r)$, and thus the Attebery graphs are generalizations of the spidergraphs.
\item  The graph called $C^{\pm 1}(p;st,s)$ in \cite{GP}.  This is an Attebery graph with $A = \ZZ_p^s, 
k = st, T:(a_1, a_2, \dots, a_s) \to (a_2, a_3, \dots, a_s, a_1), -b = a = (1, 0, 0 , \dots, 0)$.
\item The graph called $C^{\pm e}(p;2st,s)$ in \cite{GP} with $e^2 \equiv -1$ (mod p).  This is an Attebery graph with $A = \ZZ_p^s, k = 2st, T:(a_1, a_2, \dots, a_s) \mapsto (ea_2, ea_3, \dots, ea_s, ea_1), -b = a = (1, 0, 0 , \dots, 0)$.
\item   $\AMC(k,n,M)$ is $\Att(A, T, k;a, b)$ where $A$ is $\ZZ_n\times\ZZ_n$, $M$ is a $2\times 2$ matrix over $\ZZ_n$ satisfying $M^k = \pm I$,  $T$ is multiplication by $M$, and $a = (1, 0), b = (-1,0)$.
\end{enumerate}

The second and third of these are  generalized to the graph $\CPM(n, s, t, r)$, defined later in this paper.

 It is intriguing that even though the matrix
 $$ M=
  \left[
         \begin{array}{c c}
          1    &  -4  \\
	  4 &  1
        \end{array}
       \right]
  $$     
 does not satisfy the condition $M^4 = \pm I$, the graph
 $\AMC(4, 12, M)$
 is nevertheless edge-transitive, and in fact semisymmetric.
 Even more striking is that so far we have no other construction for this graph.

\section{The separated Box Product}
\label{SepBox}

	Suppose that $\Delta_1$ and $\Delta_2$ are digraphs in which every vertex has in- and out-valence 2.  We allow $\Delta_1$ and $\Delta_2$ to be non-simple.

 We form the {\em separated box product} $\Delta_1\#\Delta_2$ as the underlying graph of the orientation whose vertex set is $\V(\Delta_1)\times\V(\Delta_2)\times\ZZ_2$, and whose edge set contains two types of edges:  ``horizontal'' edges join $(a, x, 0)\rightarrow (b, x, 1)$, and ``vertical'' edges $(b, x, 1)\rightarrow (b, y,0)$, where $a\rightarrow b$ in $\Delta_1$ and $x\rightarrow y$ in $\Delta_2$.   

	An orientation is {\em reversible} provided that it is isomorphic to its reversal.   There are several useful cases of this construction:
 
\begin{itemize}
\item When $\Delta_1 = \Delta_2$ and $\Delta_1$ is reversible,  then $\Delta_1\#\Delta_2$ is dart-transitive. 
\item When $\Delta_1 = \Delta_2$ and $\Delta_1$ is not reversible,  $\Delta_1\#\Delta_2$ has  a semitransitive orientation and so might be dart-transitive or $\frac{1}{2}$-transitive.
\item When $\Delta_1$ is not isomorphic to  $\Delta_2$ or its reverse but both are reversible,  then $\Delta_1\#\Delta_2$ has an LR structure.
\item  When $\Delta_1$ is not isomorphic to  $\Delta_2$ but is isomorphic to the reverse of $\Delta_2$,  then $\Delta_1\#\Delta_2$ is at least bi-transitive  and is semisymmetric in all known cases.   
\end{itemize}
In the Census, we use for $\Delta_1$ and $\Delta_2$  directed graphs from the census of  $2$-valent dart-transitive digraphs
\cite{HATcensus,HATcensusSite} with notation ${\rm ATD}[n,i]$ for the $i^{th}$ digraph of order $n$ from that census.  We also allow the ``sausage digraph''
 ${\rm DCyc}_n$: an $n$-cycle with each edge replaced by two directed edges, one in each direction.
See \cite{SepBox} for more details.

\section{Rose Windows}
\label{sc:Rose Windows}

The {\em Rose Window graph} $\RW_n(a,r)$  has $2n$ vertices: $A_i, B_i$ for $i \in \ZZ_n$. The graph has four kinds of edges:

\begin{tabular}{rl}
Rim: &$A_i  -  A_{i+1}$\\
In-Spoke:& $A_i  -  B_i$\\
Out-spoke: &$B_i  -  A_{i+a}$\\
Hub:& $B_i  -  B_{i+r}$
\end{tabular}

For example, Figure~\ref{Fi:R1225} shows $\RW_{12}(2, 5)$.

\begin{figure}[h!]
\begin{center}
\epsfig{file=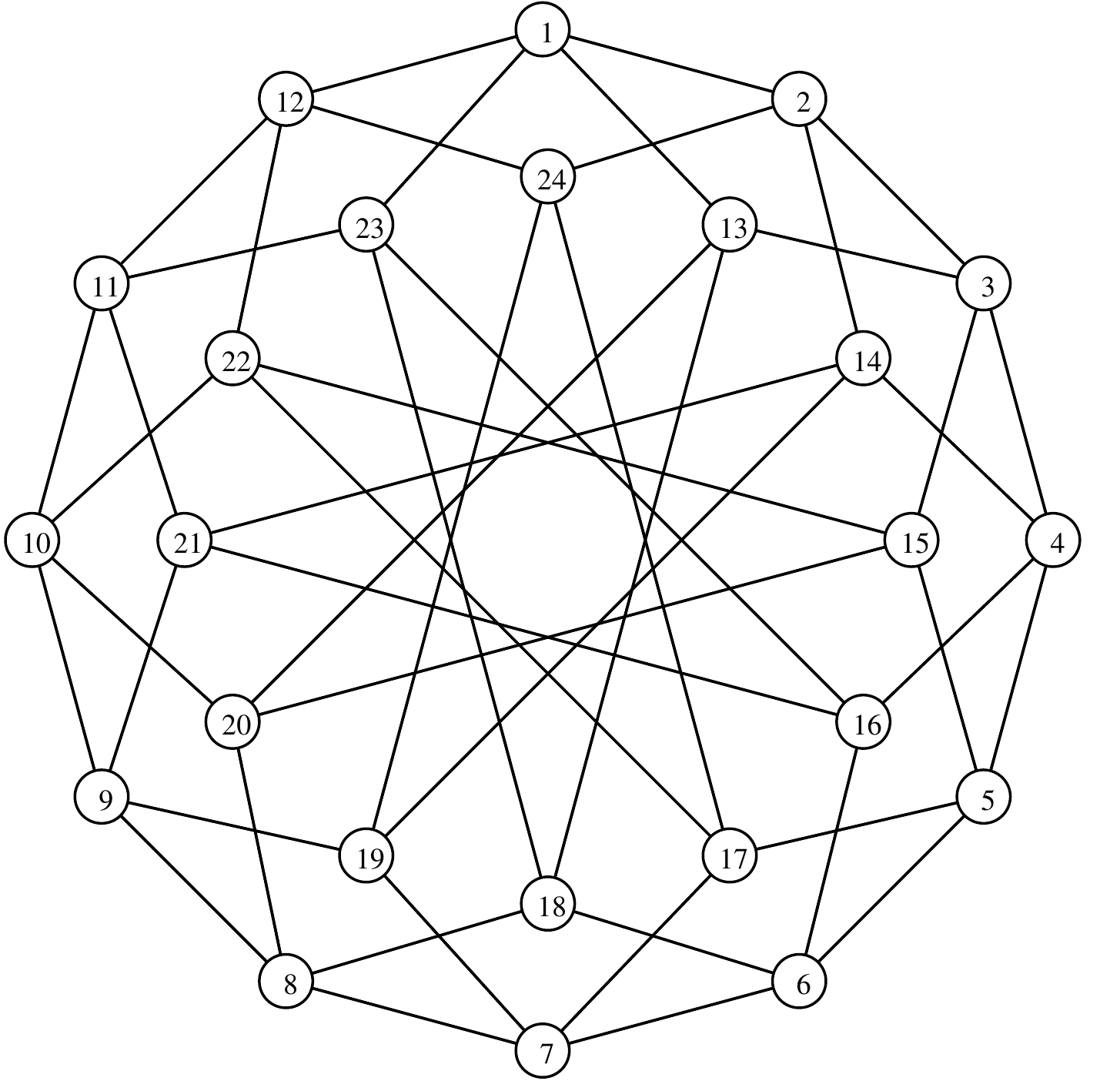, height = 50mm}
\caption{$\RW_{12}(2, 5)$}
\label{Fi:R1225}
\end{center}
\end{figure}

We will soon mention these graphs as examples of bicirculant graphs, and more generally later as examples of polycirculant graphs. 

The paper \cite{KKM} shows that every edge-transitive $\RW_n(a,r)$ is dart-transitive and is isomorphic to one of these:
\begin{enumerate}
\item[a.] $\RW_n(2, 1)$.     (This graph is isomorphic to $\W(n, 2)$.)
\item[b.] $\RW_{2m}(m+2, m+1)$.
\item[c.] $\RW_{12k}(3k\pm 2, 3k\mp 1)$.
\item[d.] $\RW_{2m}(2b, r)$, where $b^2\equiv \pm 1$ (mod $m$), $r$ is odd and $r \equiv 1$ (mod $m$).
\end{enumerate}

\section{Bicirculants}
\label{sc:Bicirculants}

A graph $\Gamma$ is {\em bicirculant} provided that it has a symmetry $\rho$ which acts on $\V$ as two cycles of the same length.  A rose window graph is bicirculant, as the symmetry sending $A_i$ to $A_{i+1}$ and  $B_i$ to $B_{i+1}$ is the required $\rho$.

	The paper \cite{KKMW} classifies all edge-transitive tetravalent bicirculant graphs.  Besides the Rose Window graphs there is one other class of graphs, called ${\rm BC4}$ in that paper, and called $\BC$ here and in the Census.  The graph $\BC_n(a, b, c, d)$ has $2n$ vertices: $A_i, B_i$ for $i \in \ZZ_n$.  The edges are all pairs of the form $\{A_i, B_{i+e}\}$ for   $i \in \ZZ_n, e \in \{a, b, c, d\}$.  It is easy to see that any such graph is isomorphic to one of the form $\BC_n(0, a, b, c)$ where $a$ divides $n$.  The edge-transitive graphs in this class consists of three sporadic examples and three infinite families.
The sporadics are:
$$ \BC_7(0, 1, 2, 4),\>
\BC_{13}(0, 1, 3, 9),\
\BC_{14}(0, 1, 4, 6)$$
	
	Of the three infinite families of graphs   $\BC_n(0, a, b, c)$ , there are two in which we can choose $a = 1$, and a third, less easy to describe, in which none of the parameters is relatively prime to $n$:
	\begin{enumerate}
\setlength{\itemsep}{0pt}	
	\item [(I)]  $\BC_n(0, 1, m+1, m^2+m+1)$, where $(m+1)(m^2+1) = 0$ (mod $n$)
	\item [(II)] $\BC_n(0, 1, d, 1-d)$, where $2d(1-d) = 0$ (mod $n$)
	\item [(III)] $\BC_{krst}(0, r,  rs's + st,  rt't+st+rst)$, where
\begin{itemize}
\setlength{\itemsep}{0pt}	
\item[{\rm (1)}] $r, s, t$ are all integers greater than $1$,
\item[{\rm (2)}] $s, t$ are odd,
\item[{\rm (3)}] $r, s, t $ are relatively prime in pairs,
\item[{\rm (4)}] $k \in \{1, 2\}$,
\item[{\rm (5)}] if $k = 2$, then $r$ is even,
\item[{\rm (6)}] $s'$ is an inverse of $s$ mod $krt$, and $t'$ is an inverse of $t$ mod $krs$.  
\end{itemize}
\end{enumerate}

\section{Semiregular symmetries and their diagrams}
\label{sc:Diagrams}
	A symmetry $\sigma$ is {\em semiregular}  provided that it acts on the $|\V|=kn$ vertices as $k$ cycles of length $n$.  We can visually represent such a graph and symmetry with a {\em diagram}.  This is a graph-like object, with labels.  Each ``node'' represents one orbit under $\sigma$.  
If $\sigma = (u_0, u_1, \dots, u_{n-1})(v_0, v_1, \dots, v_{n-1})\ldots(w_0, w_1, \dots, w_{n-1})$
 and there is an edge from $u_0$ to $v_{a}$, then there is an edge from each $u_i$ to $v_{a+i}$ (indices computed modulo $n$).
This matching between $u_i's$ and $v_i's$ is represented in the diagram  by a directed edge from node $u$ to node $v$ with label $a$ (or one from $v$ to $u$ with label $-a$). If $a=0$, then the label $a$ and the direction of the edge in the diagram can be omitted.

If  there is an edge from $u_0$ to $u_{b}$ (and thus one from each $u_i$ to $u_{i+b}$),
we represent this by a loop at $u$ with label $b$.  In the special case in which $n$ is even and $b = \frac{n}{2}$, there are only $\frac{n}{2}$ edges in the orbit and we represent them with a semi-edge at $u$.  This convention makes the valence of $u$ in the diagram the same as the valences of all of the $u_i$'s.  Finally, there is one label ``mod $n$'' on the entire diagram.

It should be pointed out that what we were describing above is  simply  the notion of a quotient of a graph (as defined, for example, in \cite{MNS}) by   the cyclic group generated by the semiregular element $\sigma$, and the diagram which we obtain is the voltage graph describing the graph $\Gamma$.

  As an example, consider the bicirculant graphs  $\RW_n(a, r)$ and $\BC_n(0, a, b, c)$, which can be represented by the diagrams shown in Figure \ref{fig:RBC}.

\begin{figure}[hhh]
\begin{center}
\epsfig{file=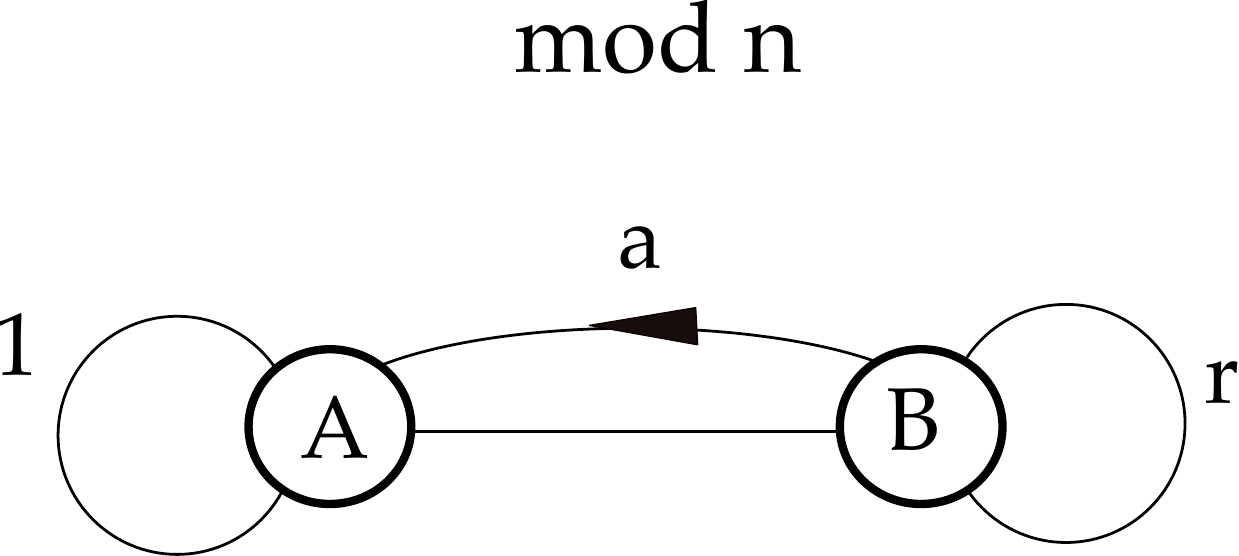,height=30mm}\hspace{32pt}\epsfig{file=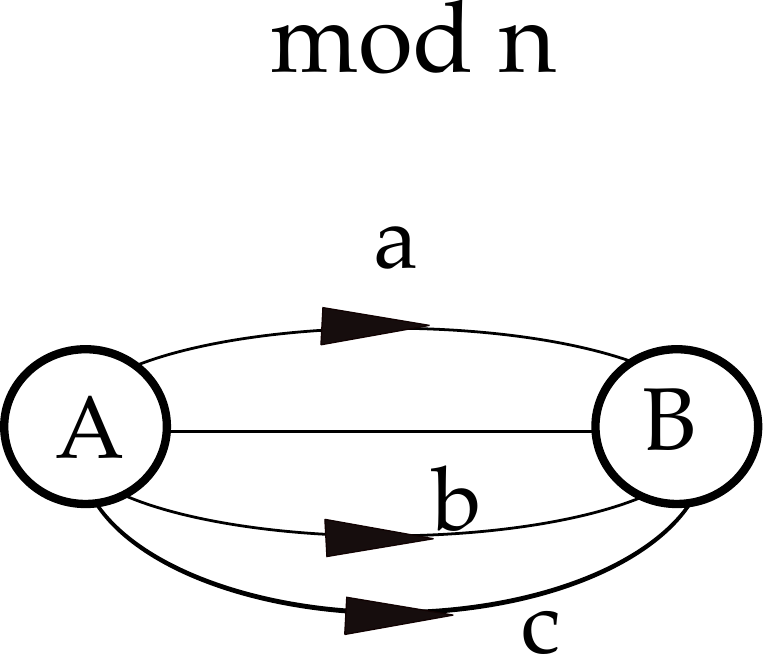,height=35mm}
\caption{Diagrams of graphs}
\label{fig:RBC}
\end{center}
\end{figure}

It is a simple matter to recover the graph from a diagram, and we use diagrams to define many families of graphs.

\subsection{Propellors}
\label{sc:Propellors}

	\begin{figure}[hhh]
\begin{center}
\epsfig{file=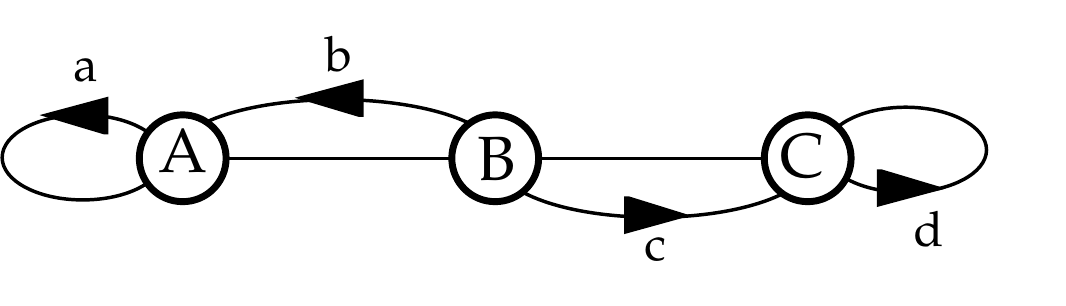,height=25mm}
\caption{Diagram for the graph $\Prr_n(a, b, c, d)$}
\label{fig:PropDiag}
\end{center}
\end{figure}

	A {\em Propellor Graph} is a graph with the diagram shown in Figure \ref{fig:PropDiag}.  This means that the graph has $3n$ vertices:$A_i, B_i, C_i$ for $i \in \ZZ_n$.   There are 6 kinds of edges:

\begin{tabular}{rl}
Tip: &$A_i  -  A_{i+a}$\\
&$C_i - C_{i+d}$\\
Flat:& $A_i  -  B_i$\\
&$B_i  -  C_i$\\
Blade:& $B_i  -  A_{i+b}$\\
& $B_i  -  C_{i+c}$
\end{tabular}

Propellor graphs have been investigated by Matthew Sterns.  He conjectures in  \cite{MSthesis} that the edge-transitive propellor graphs are isomorphic to

\begin{enumerate}
\item [I] $\Prr_n(1, 2d, 2, d)$, where $d^2 \equiv \pm 1$ (mod $n$)
\item [II] $\Prr_n(1, b, b+4, 2b+3)$, where $8b+16 \equiv 0$ (mod $n$) and $b \equiv 1$ (mod $4$)
\item [III] These five sporadic examples: 
$$\Prr_{5}(1, 1, 2, 2),   \Prr_{10}(1,1,2,2), \> \Prr_{10}(1, 4, 3, 2), \>\Prr_{10}(1, 1, 3, 3), \> \Prr_{10}(2, 3, 1, 4).$$
\end{enumerate}

Notice first that in every case except the last of the sporadic cases, the A-tip edges form a single cycle.  The first of the infinite families consists  of the {\em 2-weaving} graphs: those graphs in which some symmetry of the graphs sends this cycle to a cycle of the form AB AB AB$\ldots$  The second class consists of {\em 4-weaving} graphs: those in which some symmetry sends the A-tip cycle to a cycle of the form ABCB ABCB AB$\ldots$   There is also a class of {\em 5-weaving} graphs in which some image of the tip cycle is of the form AABCB AABCB AAB$\ldots$, but the requirements for this class force $n$ to divide 10, resulting in the first four of the sporadic cases.   This leaves the last sporadic graph as something of a mystery.

A more recent paper \cite{Pr} proves the conjecture, though it presents the graphs in a slightly different way.

\subsection{Metacirculants}
\label{sc:Metacirculants}

In \cite{MS}, Maru\v{s}i\v{c} and \v{S}parl considered tetravalent graphs which are properly called {\em weak metacirculant}, though we will  simply refer to them as {\em metacirculant} in this paper. A graph is metacirculant provided that it has a symmetry $\rho$ which acts on the $kn$ vertices as $k$ cycles of length $n$ and another symmetry $\sigma$ which normalizes $\langle \rho\rangle$ and permutes the $k$ $\rho$-orbits in a cycle of length $k$.  That paper  divides the $\frac{1}{2}$-transitive tetravalent metacirculant graphs into four classes.  The Type I graphs are the Power Spidergraphs $\PS(k,n,;r)$ and $\MPS(k, n; r)$. Papers \cite { M1, MS, S1, STG} completely determine which of these are $\frac{1}{2}$-transitive and which are dart-transitive. The Type II graphs
 are called ${\rm Y}$ there and will be called $\MSY$ here and in the census.  

 These also have been classified, in unpublished work.  See Section \ref{MSY} below.   The graphs of Type III we will call $\MC3$ in this census.  They have been studied with a few results.   See Section \ref{MC3}. The general Type IV graphs are very unruly.  Only a subclass of graphs, called ${\rm Z}$ in\cite{MS} and $\MSZ$ in the Census, have received much study and even here, there are few results.  See Section \ref{MSZ} for  a description of the graph.

\subsubsection{\MSY}
\label{MSY}
The graph $\MSY(m,n;r,t)$ has the diagram shown in Figure \ref{Fig:MSY}.  More precisely, its vertex set is $\ZZ_m \times \ZZ_n$, with two kinds of edges:
\begin{itemize}
\setlength{\itemsep}{0pt}
 \item[{\rm (1)}]  $(i, j) - (i, j+r^i)$ for all $i$ and $j$, and 
 \item[{\rm (2)}] $(i,j) - (i+1, j)$ for $0 < i<k-1$ and $(k-1, j) - (0, j+t)$ for all $j$.  
 \end{itemize}
 The graph is metacirculant if and only if $r^m = 1$  and $rt = t$.  Here, all equalities are equivalences mod $n$.  

\begin{figure}[hhh]
\begin{center}
\epsfig{file=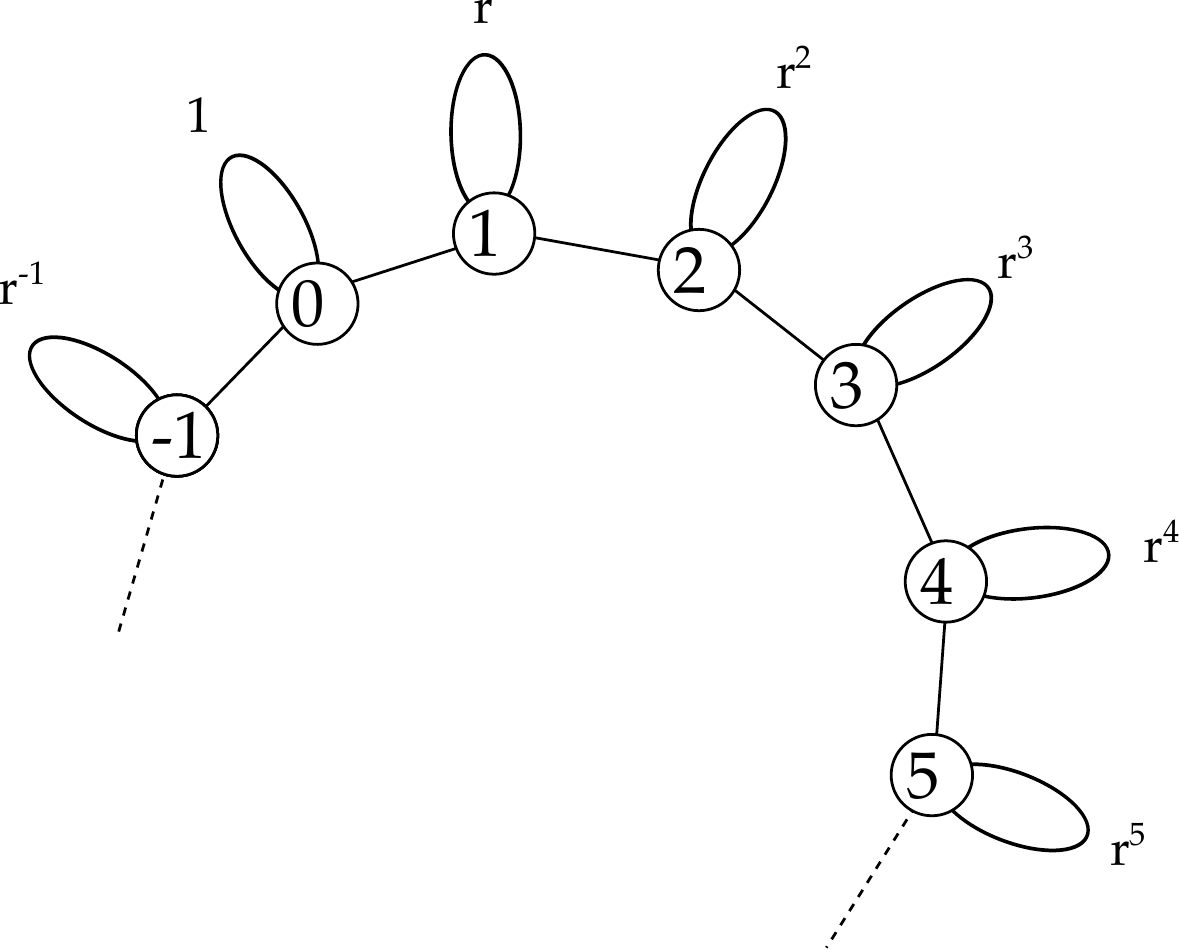,height=55mm}
\caption{Diagram for the graph $\MSY(k,n; r,t)$}
\label{Fig:MSY}
\end{center}
\end{figure}

The paper \cite{MS} proves that $\MSY(k, n;r, t)$ is metacirculant and  $\frac{1}{2}$-transitive if and only if it is isomorphic to one in which:

\begin{itemize}
\setlength{\itemsep}{0pt}
 \item[{\rm (1)}]   $n = dm$ for some integer $d \ge 3$,
 \item[{\rm (2)}]  $r^m = 1$,
 \item[{\rm (3)}] $r^2 \ne \pm1$,
 \item[{\rm (4)}]  $m(r-1) = t(r-1) = (r-1)^2 = 0$,
 \item[{\rm (5)}] $\langle m\rangle = \langle t\rangle$ in $\ZZ_n$,
 \item[{\rm (6)}] there is a unique $c$ in $\ZZ_d$ satifying $cm = t$ and $ct = m$, 
 \item[{\rm (7)}] there is a unique $k$ in $\ZZ_d$ satifying $kt = -km = r-1$, and 
 \item[{\rm (8)}] either $m \ne 4$ or $t \ne 2+2r$.
\end{itemize}

The paper \cite{MSY} shows that  $\MSY(m,n;r,t)$ is metacirculant and edge-transitive if and only if it is isomorphic to one in which $r^m = 1, rt = t$ and one of three things happens:

\begin{itemize}
\setlength{\itemsep}{0pt}
\item[{\rm (1)}] $m = (t, n)$ (then $t = sm, n=n'm$ and $(n', s) = 1$);
\item[{\rm (2)}] $r \equiv 1$ (mod $m$) and so $r = km+1$ for some $k$;
\item[{\rm (3)}] $st = m$;
\item[{\rm (4)}] $kt = -km$.
\end{itemize}
or
\begin{itemize}
\setlength{\itemsep}{0pt}
\item[{\rm (1)}] $m = (t, n)$ (then $t = sm, n=n'm$ and $(n', s) = 1$);
\item[{\rm (2)}] $r \equiv 1$ (mod $m$) and so $r = km+1$ for some $k$;
\item[{\rm (3)}] $st = -m$;
\item[{\rm (4)}] $kt = km$.
\end{itemize}
 or
 $[m,n,r,t]$ is one of these four sporadic examples: 
 $$[5, 11, 5, 0],\> [5, 22, 5, 11],\> [5,33, 16, 0],\> [5, 66, 31, 33].$$

\subsubsection{MSZ}
\label{MSZ}

The graph $\MSZ(m, n; k, r)$ has a diagram  isomorphic to the circulant graph $C_m(1, k)$.   It has vertex set $\ZZ_m\times\ZZ_n$.   The vertex $(i, j)$ is adjacent to $(i+1, j)$ and to $(i+k, j + r^i)$.
Very little is known about this family of graphs or the more general class of type IV metacirculants.

\subsubsection{MC3}
\label{MC3}
	The diagram for this family is a cycle of even length, with each node joined by two edges to the one opposite. 
The vertex set for $\Gamma = \MC3(m, n, a, b, r, t, c)$ (here, $m$ must be even) is $\ZZ_m\times\ZZ_n$.  Green edges connect each $(i, j)$ to $(i+c, j)$ for $i = 0, 1, 2, \dots, m-2$ and each $(m-1, j)$to $(0, j+t)$.  Red edges join each $(i, j)$ to $(i+\frac{m}{2}, j+ar^i)$ and $(i+\frac{m}{2}, j+br^i)$.  In order for this to be metacirculant, we must have $rt = t,r^m = 1, and \{a+t, b+t\} = \{-ar^\frac{m}{2}, -br^\frac{m}{2}\}$

 Because it has been shown  that each such graph which is $\half$-arc-transitive is also a $\PS,\MPS, \MSY$ or $\MSZ$, little attention has been given to it.  However, many MC3's are dart-transitive and many are LR structures (see section \ref{sc:Cycle Decompositions}).  Each of the following families is such an example:
\begin{enumerate}
\setlength{\itemsep}{0pt}
\item   $m$ is divisible by 4, $n$ is divisible by 2, $r^2 = \pm 1, a= 1, b = -1, t = 0$,
\item $m$ is  not divisible by 4, $n$ is divisible by 4, $r^2 =  1, a= 1, b = -1, t = \frac{n}{2}$,
\item $n$ is divisible by 4, $r^2=  1, a= 1, b = n/2 -1, t = n/2$, or 
\item $m$ is  not divisible by 4, $n$ is divisible by 4, $r^2 =  1, a= 1, b =n/2 -1, 2t = 0$.
\end{enumerate}

These were found and proved to be LR strucures (see section \ref{sc:Cycle Decompositions}) by Ben Lantz \cite{MC3}, and there are  LR examples not covered by these families.  Further, many of the MC3 graphs are dart-transitive.  There are many open questions about this family.

\subsection {Other diagrams}
\label{sc:OD}
A number of other diagrams have been found to give what appears to be an infinite number of examples of edge-transitive graphs.  The first of these is the Long Propellor,$\LoPr_n(a, b, c, d)$, shown in Figure \ref{Fig:LoPr}.
\begin{figure}[hhh]
\begin{center}
\epsfig{file=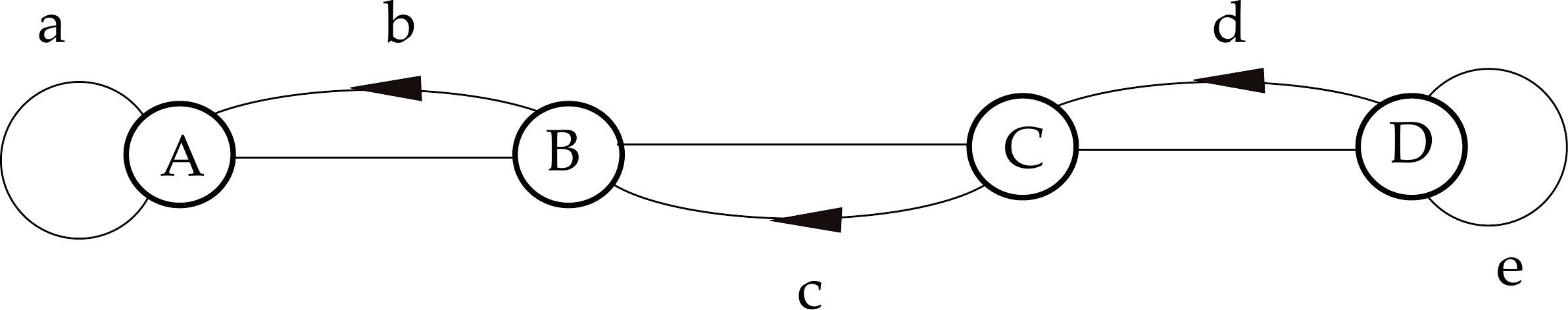,height=20mm}
\caption{Diagram for the graph $\LoPr_n(a, b, c, d, e)$}
\label{Fig:LoPr}
\end{center}
\end{figure}

Next is the Wooly Hat, $\WH_n(a, b, c, d, e)$, shown in Figure \ref{Fig:wooly}.  This diagram gives no edge-transitive covers, but it does yield a family of LR structures (see Section \ref{sc:Cycle Decompositions}), as yet unclassified.

\begin{figure}[hhh]
\begin{center}
\epsfig{file=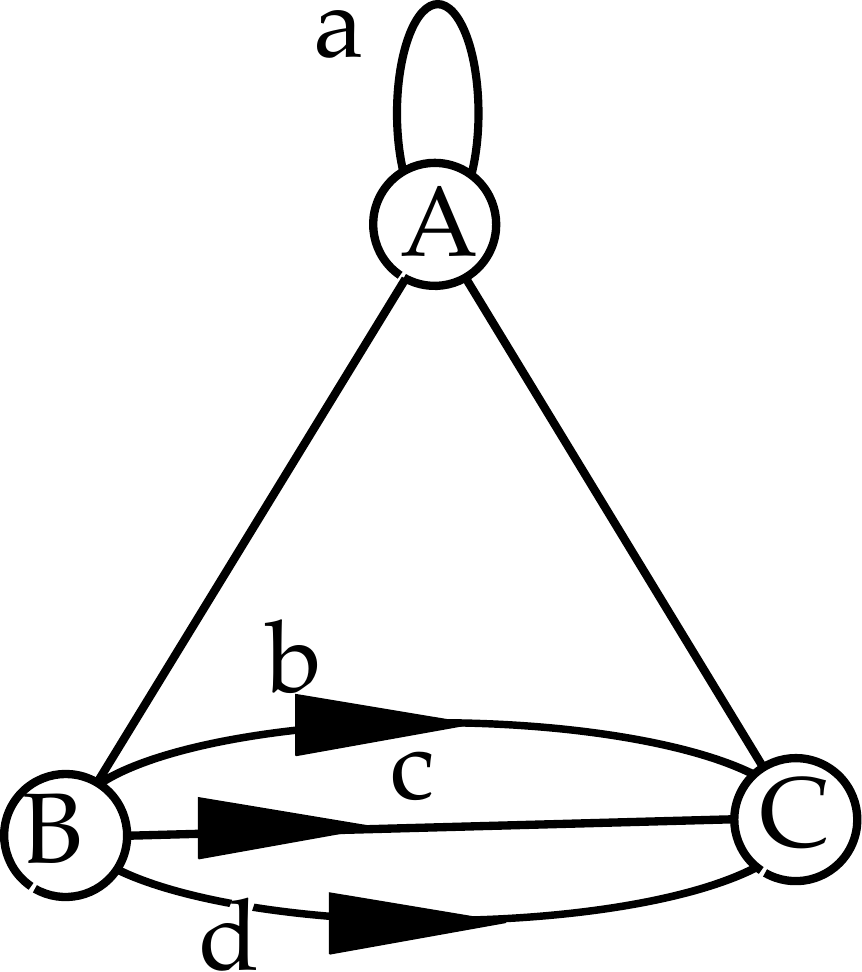,height=35mm}
\caption{Diagram for the graph $\WH_n(a, b, c, d, e)$}
\label{Fig:wooly}
\end{center}
\end{figure}
The Kitten Eye, $\KE_n(a, b, c, d, e)$, shown in Figure \ref{Fig:KE}, has dart-transitive covers.
\begin{figure}[hhh]
\begin{center}
\epsfig{file=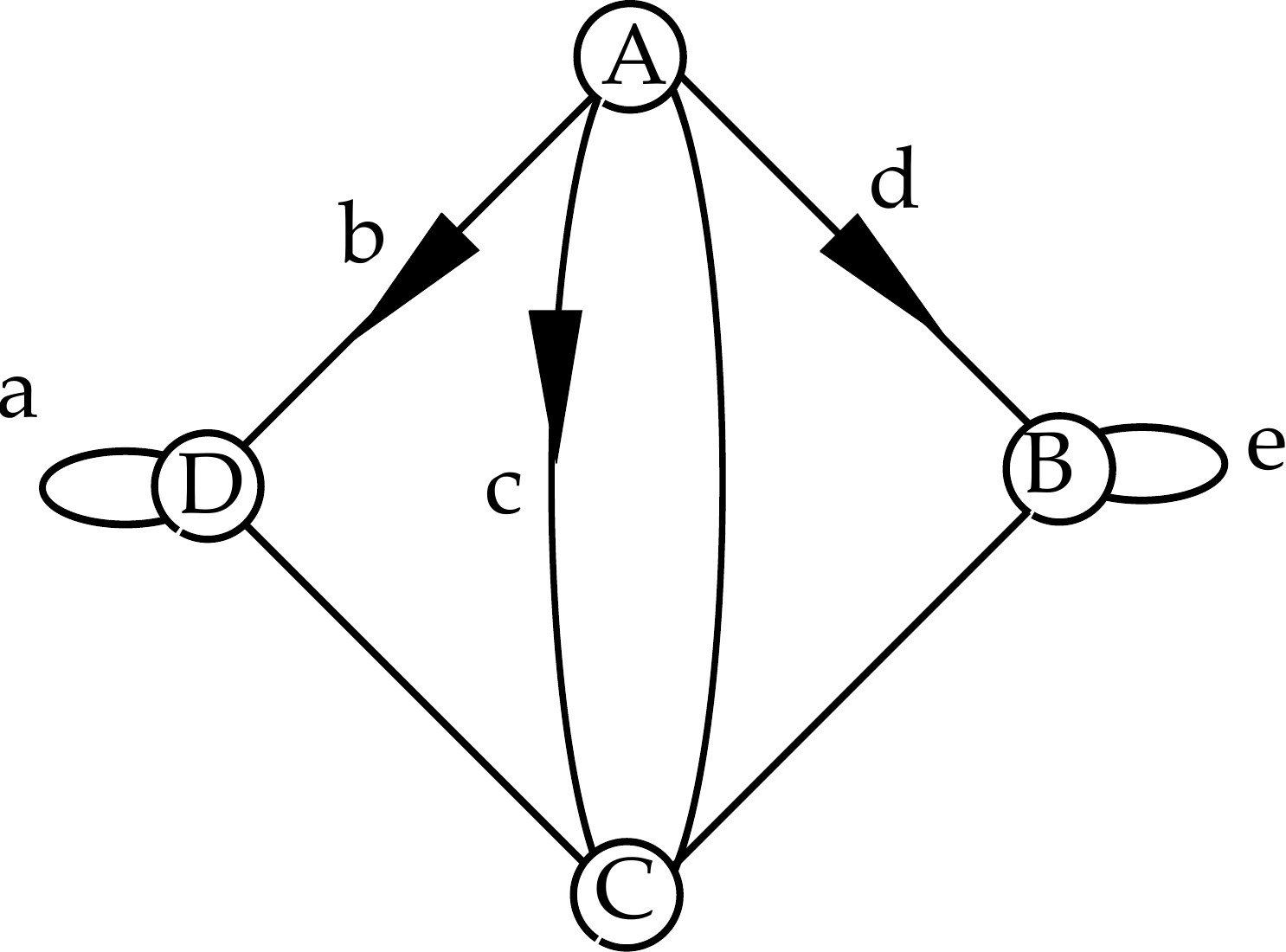,height=35mm}
\caption{Diagram for the graph $\KE_n(a, b, c, d, e)$}
\label{Fig:KE}
\end{center}
\end{figure}

The Curtain,  $\Curtain_n(a, b, c, d, e)$, shown in Figure \ref{Fig:Curtain}, has both dart-trasitive and LR covers.
\begin{figure}[hhh]
\begin{center}
\epsfig{file=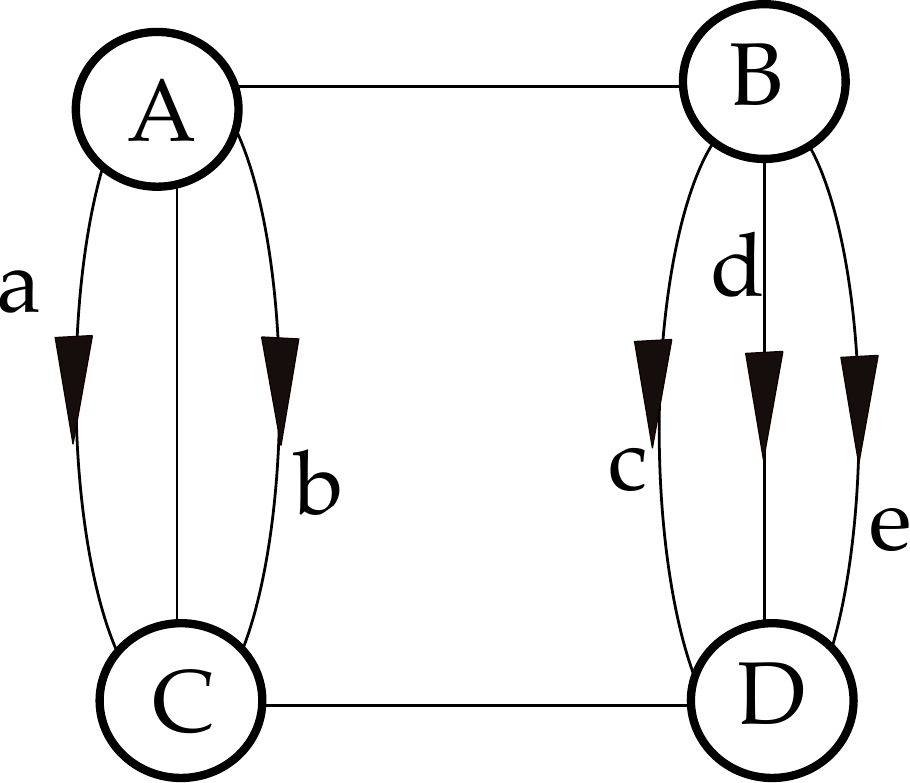,height=30mm}
\caption{Diagram for the graph $\Curtain_n(a, b, c, d, e)$}
\label{Fig:Curtain}
\end{center}
\end{figure}

\section{Praeger-Xu Constructions}
\label{sec:PXC}

	The graph called $C(2, n,k)$ in \cite{PX} is also described in \cite {GP} in two different ways.  In this Census, we will name the graph $\PC(n, k)$.  In order to describe the graph, we need some notation about bit strings.  A {\em bit string} of length $k$ is the concatenation of $k$ symbols, each of them a '0' or a '1'.  For example $x$ = 0011011110 is a bit string of length 10.  If $x$ is a bit string of length $k$, then $x_i$ is its $i$-th entry, and $x^i$ is the string identical to $x$ in every place except the $i$-th.  Also $1x$ is the string of length $k+1$ formed from $x$ by placing a '1' in front; similar definitions hold for the $(k+1)$-strings $0x, x0, x1$.  Finally, the string $\bar{x}$ is the reversal of $x$.

	The vertices of the graph $\PC(n, k)$ are ordered pairs of the form $(j,x)$, where $j \in \ZZ_n$ and $x$ is a bit string of length $k$.  Edges are all pairs of the form  $\{(i,0x), (i+1, x0)\}, \{(i,0x), (i+1, x1)\}, \{(i,1x), (i+1, x0)\}, \{(i,1x), (i+1, x1)\}$, where $x$ is any bit string of length $k-1$.

	We first wish to consider some symmetries of this graph.  First we note  $\rho$ and $\mu$ given by  $(j, x)\rho = (j+1, x)$ and $(j,x)\mu = (-j, \bar{x})$   These are clearly symmetries of the graph and act on it as $D_r$.

	For $b \in \ZZ_n$, we define the symmetry $\sigma_b$ to be the permutation which interchanges $(b-i, x)$ with $(b-i, x^i)$ for $i = 1, 2, 3, \dots , k$ and leaves all other vertices fixed.  If $ n > k$, then the symmetries $\sigma _0, \sigma_1, \dots, \sigma_{n-1}$ commute with each other  and thus generate an elementary abelian group of order $2^n$, while the  symmetries
$\rho, \mu$ and $\sigma_b$'s generate a semidirect product $\ZZ_2^n\rtimes D_n$ of order $n2^{n+1}$. Unless
$n=4$, this is also the full symmetry group of the graph.

	The Praeger-Xu graphs generalize two families of graphs:  $\PC(n,1) = \W(n, 2)$ and $\PC(n, 2) = \RW_{2n}(n+2, n+1)$.

\section{Gardiner-Praeger Constructions}
\label{sc:GP}

The paper \cite {GP} constructs two families of tetravalent graphs whose groups contain large normal subgroups such that the factor graph is a cycle.  The first is $C^{\pm 1}(p,st,s)$,  and the second is $C^{\pm e}(p,2st,s)$.  In the Census, we use a slight generalization of both, which we notate $\CPM(n, s, t, r)$, where $n$ is any integer at least 3, $s$ is an integer at least 2,  $t$ is a positive integer, and $r$ is a unit mod $n$.
	We first form the digraph $\CPM[n, s, t, r]$.  Its vertex set is $\ZZ_n^s\times\ZZ_{st}$.  Directed edges are of the form $((x, i), (x\pm r^ie_j, i+1))$, where $j$ is $i$ mod $s$, and $e_j$ is the $j$-th standard basis vector for $\ZZ_n^s$.  If $r^{st}$ is $\pm 1$ mod $n$, then $\CPM[n, s, t, r]$ is a semitransitive orientation for its underlying graph $\CPM(n, s, t, r)$.  When the graph is not connected, we re-assign the name $\CPM(n, s, t, r)$ to the component containing $(0, 0)$.  If $n$ is odd, then the graph has $stn^s$ vertices.  If $n$ is even then it has $st(\frac{n}{2})^s$ vertices if $t$ is even and twice that many if $t$ is odd.

	Some special cases are known:  For $r= 1, \CPM(n, s, t, 1)\cong C^{\pm 1}(n,st,s)$.  If $r^2 = 1$, then $\CPM(n, s, 2t, r)$ is $C^{\pm r}(p,2st,s)$.  When $s = 1, \CPM(n, 1, t, r)\cong \PS(n,t;r)$.  If $s=1$ and $t=4$, then the graph is a Wreath graph.  When $s = 2$ and $t$ is 1 or 2, then the graph is toroidal.  Other special cases are conjectured:

The convention in the following conjectures is that $q$ is a number whose square is one mod $m$ and p is the parity function:

\begin{equation}
p(t) = \left\lbrace{\begin{array}{ll}
1  &\mbox{if t is odd} \\
2  &\mbox{if t is even}\\
\end{array}}\right.  \end{equation}

With that said, we believe that:

\begin{enumerate}
\item If $t$ is not divisible by 3, then $\CPM(3, 2, t, 1) \cong \PS(6, m; q)$  where $m = 3t$.
\item If $t$ is not divisible by 5, then $\CPM(5, 2, t, 1) \cong \CPM(5, 2, t, 2) \cong \PS(10, m; q)$  where $m= 5tp(t)$.
\item If $t$ is not divisible by 3, then $\CPM(6, 2, t, 1)  \cong \PS(6, m; q)$  where $m=\frac{12t}{p(t)}$.
\item If $t$ is not divisible by 4, then $\CPM(8, 2, t, 1) \cong \CPM(8, 2, t, 3) \cong \MPS(8, m; q)$  where $m= \frac{16t}{p(t)}$.
\item For all $s$, $\CPM(4, s, t, 1)\cong \PC(\frac{2st}{p(t)}, s)$.
\end{enumerate}

\section{Graphs $\Gamma^\pm$ of Spiga, Verret and Poto\v{c}nik}
\label{sc:Cubic}

It was proved in \cite{lost} that a tetravalent graph $\Gamma$ whose automorphism group $G$ is dart-transitive
is either $2$-arc-transitive (and then $|G_v| \le  2^43^6$), a $\PX$-graph (see Section~\ref{sec:PXC}), one of eighteen 
exceptional graphs, or it satisfies the inequality 
$$|V(\Gamma)| \ge 2|G_v| \log_2(|G_v|/2). \eqno{(*)}$$
 This result served as the basis
for the construction of a complete list of dart-transitive tetravalent graphs (see \cite{CubicCensus} for details). 

Moreover, in \cite{gamma} it was proved that a graph attaining the bound $(*)$ has $t2^{t+2}$ vertices for some $t\ge 2$
and is isomorphic to one of the graphs
$\PPM(t,\epsilon)$, for $\epsilon \in \{0,1\}$,
 defined below as \emph{coset graphs} of certain groups $G_t^+$ or $G_t^-$. Recall that the 
coset graph $\Cos(G,H,a)$ on a group $G$ relative to a subgroup $H\le G$ and an element $a\in G$ 
is defined as the graph with vertex set 
 the set of right cosets $G/H = \{Hg \mid g \in G\}$ and with edge set the set $ \{ \{Hg, Hag\} \mid g \in G\}$.

 For $\epsilon \in \{0,1\}$ and $t\ge 2$, let $G_t^\epsilon$  be the group defined as follows:
$$
\begin{array}{ll}
G_t^\epsilon =\langle x_0,\ldots,x_{2t-1},z, a, b \> \> \mid & x_i^2=z^2 = b^2 = z^\epsilon a^{2t}= (ab)^2  = 1, \\
                                                                        & [x_i,z] = 1 \textrm{ for }0\leq i\leq 2t-1, \\
                                  & [x_i,x_j]  =  1 \textrm{ for }|i-j|\neq t, \\
&     [x_i,x_{t+i}]  =  z \> \textrm{ for  }  \> 0\leq i\leq t-1, \\
&    x_i^a = x_{i+1}\> \hbox{  for } \> 0\leq i\leq 2t-1,\> \\
&    x_i^b = x_{t-1-i}\> \hbox{  for } 0 \>\leq i\leq 2t-1 \rangle.
\end{array}
 $$ 
 In either group, we let $H = \langle x_0,\ldots,x_{t-1},b\rangle$, and define graphs
 $$\PPM(t,\epsilon) = \Cos(  G_t^\epsilon, H, a),$$
 denoted $\Gamma_t^+$ (for $\epsilon = 0$) and $\Gamma_t^-$ (for $\epsilon = 1$) in \cite{gamma}.
We should point out that a graph $\Gamma =\PPM(t,\epsilon)$ is a $2$-cover of the Praeger-Xu graph $\PX(2t,t)$. Furthermore, 
 the girth of $\PPM(t,\epsilon)$ is generally $8$, the only exceptions being that $\PPM(2,0)$ has girth 4, and $\PPM(3,0)$ has girth 6.
Finally, $\PPM(2,0) \cong \PX(4,3)$, while in all other cases the graph $\Gamma$ is not isomorphic to a $\PX$ graph.

\section{From Cubic graphs}
\label{sc:Cubic}

	In this section, we describe five constructions, each of which constructs a tetravalent graph from a smaller cubic (i.e., trivalent) graph in such a way that the larger graph inherits many symmetries from the smaller graph.  Throughout this section, assume that $\Lambda$ is a  cubic graph, and that it is dart-transitive.  Our source of these graphs is Marston Conder's census of symmetric cubic graphs of up to 10,000 vertices  \cite{MC3}.

\subsection{Line graphs}
	The {\em line graph} of $\Lambda$ is a graph $\Gamma = L(\Lambda)$ whose vertices are, or correspond to, the edges of $\Lambda$.  Two vertices of $\Gamma$ are joined by an edge exactly when the corresponding edges of $\Lambda$ share a vertex.  Every symmetry of $\Lambda$ acts on $\Gamma$ as a symmetry, though $\Gamma$ may have other symmetries as well.  Clearly, if $\Lambda$ is edge-transitive, then $\Gamma$ is vertex-transitive.   If $\Lambda$ is dart-transitive, then $\Gamma$ is edge-transitive, and if $\Lambda$ is 2-arc-transitive, then $\Gamma$ is dart-transitive.

\subsection{Dart Graphs}
	The {\em Dart Graph} of  $\Lambda$ is a graph $\Gamma = \DG(\Lambda)$ whose vertices are, or correspond to, the darts of $\Lambda$.  Edges join a dart $(a,b)$ to the dart $(b, c)$ whenever $a$ and $c$ are neighbors of $b$.  Clearly, $ \DG(\Lambda)$ is a two-fold cover of $L(\Lambda)$.


\subsection{Hill Capping}
	For every vertex $A$ of $\Lambda$, we consider the {\em symbols} $(A, 0), (A, 1)$, though we will ususally write them as $A_0, A_1$.   Vertices of $\Gamma = \HC(\Lambda)$ are all unordered pairs $\{A_i, B_j\}$ of symbols where $\{A, B\}$ is an edge of $\Lambda$.  Edges join each vertex $\{A_i, B_j\}$ to $\{B_j, C_{1-i}\}$  where $A$ and $C$ are neighbors of $B$. 

	If $\Lambda$ is bipartite and 2-arc-transitive then $\Gamma$ is dart-transitive. If $\Lambda$ is bipartite and  {\em not} 2-arc-transitive then $\Gamma$ is semisymmetric.  If $\Lambda$ is  {\em not} bipartite and is 2-arc-transitive then $\Gamma$ is $\frac{1}{2}$-arc-transitive.

	$\HC(\Lambda)$ is clearly a fourfold cover of $L(\Lambda)$; it is sometimes but not always a twofold cover of $\DG(\Lambda)$.  The Hill Capping is described more fully in \cite{HW}.

\subsection{3-arc graph}
\label{ssc:TAG}
	The three-arc graph of $\Lambda$, called $A_3(\Lambda)$ in the literature  \cite{KZ} and called $\TAG(\Lambda)$in the Census, is a graph whose vertices are the darts of $\Lambda$, with $(a, b)$ adjacent to $(c, d)$ exactly when $[b, a, c, d]$ is a 3-arc in $\Lambda$.  Thus, $a$ and $c$ are adjacent, $b\ne c$ and $a\ne d$.  This graph is dart-transitive if $\Lambda$ is 3-arc transitive.  

\section{Cycle Decompositions}
\label{sc:Cycle Decompositions}
	A {\em cycle decomposition} of a tetravalent graph $\Lambda$ is a partition $\C$ of its edges into cycles.  Every edge belongs to exactly one cycle in $\C$ and each vertex belongs to exactly two cycles of $\C$.  $\Aut(\C)$ is the group of all symmetries of $\Lambda$ which preserve $\C$.   One possibility for a symmetry is a {\em swapper}.    If $v$ is a vertex on the cycle $C$, a $C-${\em swapper} at $v$ is a symmetry which reverses $C$ while fixing $v$ and every vertex  on the other cycle through $v$.

 If $\C$ is a cycle decomposition of $\Lambda$, the {\em partial line graph} of $\C$, written $\PP(\C)$ and notated ${\rm PL}(\C)$ in the Census, is a graph $\Gamma$ whose vertices are (or correspond to) the edges of $\Lambda$, and whose edges are all $\{e, f\}$ where $e$ and $f$ are edges which share a vertex but belong to different cycles of $\C$.

	Because $\Aut(\C)$ acts on $\Gamma$ as a group of its symmetries, the partial line graph is useful for constructing  graphs having a large symmetry group.     Almost all tetravalent dart-transitive graphs have cycle decompositions whose symmetry group is transitive on darts.  These are called ``cycle structures'' in \cite{PWCD}.

	If $\Lambda$ is $\frac{1}{2}$-arc-transitive, then it has a cycle decomposition $\A$ into 'alternating cycles' \cite{MP}.  If the stabilizer of a vertex has order at least 4, then $\PP(\A)$ has a $\half$-arc-transtive action and may actually be $\frac{1}{2}$-arc-transitive.  

	Many graphs in the census are constructed from smaller ones using the partial line graph.  Important here are the Praeger-Xu graphs.  Each $\PC(n, k)$ has a partition $\C$ of its edges  into 4-cycles  of the form:$(i,0x), (i+1, x0), (i,1x), (i+1, x1)$.  Then $\PP(\C)$ is $\PC(n, k+1)$.  A special case of this is that family (b) of Rose Window graphs is $\PP$ applied to family (a), the wreath graphs.

	The toroidal graphs have a cycle decomposition in which each cycle consists entirely of vertical edges or entirely of horizontal edges.  The partial line graph of this cycle decomposition is another toroidal graph.  For the rotary case,  $\PP(\{4, 4\}_{b, c}) = \{4, 4\}_{b+c, b-c}$.    The other two cases of toroidal maps do not have edge-transitive partial line graphs.

	It may happen that a cycle decomposition $\C$ is a 'suitable LR structure' \cite{LR1}; this means that $\C$ has a partition into $\R$ and $\G$ (the 'red' and the 'green' cycles) such that every vertex belongs to one cycle from each set, that the subgroup of $\Aut(\C)$ which sends $\R$ to itself is transitive on the vertices of $\Lambda$, that $\Aut(\C)$ has all possible swappers, that no element of $\Aut(\C)$ interchanges $\R$ and $\G$ and, finally, that no 4-cycles alternates between $\R$ and $\G$.  With all of that said, if $\C$ is a suitable LR structure, then $\PP(\C)$ is a semisymmetric tetrtavalent graph in which each edge belongs to a 4-cycle.  Further, every such graph is constructed in this way. \cite{LR1}

\section{Some LR Structures}
\label{sc:SomeLR}
  
\subsection{Barrels}
	The barrels are the most common of the suitable LR structures.  The standard barrel is $\Br(k, n; r)$, where $k$ is an even integer $\ge 4$, $n$ is an integer $\ge 5$ and $r$ is a number mod $n$ such that $r^2 = \pm 1$ (mod $n$) but $r \ne \pm 1$ (mod $n$).  The vertex set is $\ZZ_k\times\ZZ_n$.  Green edges join each $(i, j)$ to $(i, j+r^i)$.  Red edges join each $(i, j)$ to $(i+1, j)$.   

The mutant barrel is M$\Br(k, n; r)$, where $k$ is an even integer $\ge 2$, $n$ is an {\em even} integer $\ge 8$ and $r$ is a number mod $n$ such that $r^2 = \pm 1$ (mod $n$) but $r \ne \pm 1$ (mod $n$).  The vertex set is $\ZZ_k\times\ZZ_n$.  Green edges join each $(i, j)$ to $(i, j+r^i)$.  Red edges join each $(i, j)$ to

$$\begin{cases}(i+1, j)&\text{if $i \ne k-1$ }\\ (0, j+\frac{n}{2})&\text{if $i = k-1$}\\ \end{cases}.$$

\subsection{Cycle Structures}
	If $\Lambda$ is a tetravalent graph admiting a cycle structure $\C$, we can form an LR struture from it in two steps:

1. Replace each vertex $v$ with two vertices, each incident with the two edges of one of the two cycles in $\C$ containing $v$; think of these as green edges.  Join the two vertices corresponding to $v$ with two parallel red edges.

2.  Double cover this structure.  We assign weights or voltages to red edges so that each pair has one 0 and one 1.  Voltages for green edges are assigned in one of two ways:  (0) every green edge gets voltage 0 or (1) one edge in each green cycle gets voltage 1, and the rest get 0.  The double covers corresponding to these two assignments are called $\CS(\Lambda, \C, 0)$ and  $\CS(\Lambda, \C, 1)$, respectively,  and they are, in most cases, suitable LR structures, as shown in \cite{LRcomb}.

\subsection{Bicirculants}
\label{ss:BC}
	Consider a bicoloring of the edges of the bicirculant $\BC_n(0, a, b, c)$ with green edges linking $A_i$ to $B_i$ and $B_{i+a}$ and  red edges linking $A_i$ to $B_{i+b}$ and $B_{i+c}$.  We call this coloring $\BC_n(\{0, a\}, \{b, c\})$.  The paper \cite{LRalg} shows several cases in which  $\BC_n(\{0, a\}, \{b, c\})$ is a suitable LR structure:
\begin{enumerate}
\item $a = 1-r,    b=1,    c=s$, where 
$r,s \in \ZZ_n^*\setminus\{-1,1\},\> r^2 = s^2 = 1, \> r\not \in \{-s,s\}, 
                  \> \hbox{ and } \> (r-1)(s-1) = 0. $
\item $n = 2m, a = m, b = 1,  c\in \ZZ_{2m}\setminus \{1,-1, m+1, m-1\}$ such that $c^2 \in \{1, m+1\}$.
\item $n = 4k, a = 2k, b= 1, c = k+1,$ for $k \ge 3$
\end{enumerate}
Moreover, it is conjectured in that paper that every suitable $\BC_n(\{0, a\}, \{b, c\})$ is isomorphic to at least one of these three.

\subsection{\MSY's and \MSZ's}
	The graph $\MSY(m,n;r,t)$ has an LR structure, with edges of the first kind being red and those of the second kind being green, if and only if $2t = 0$ and $r^2 = \pm1$.  Many examples of $\MSZ$ graphs being suitable LR structures are known, but no general classification has been attempted.

\subsection{Stack of Pancakes}
	The structure is called $\SoP(4m, 4n)$.  Let $r = 2n+1$.  The vertex set is $\ZZ_{4m} \times \ZZ_{4n} \times \ZZ_2$.  Red edges join $(i,j,k)$ to $(i, j\pm r^k, k)$; for a fixed $i$ and $j$,  green edges join the two vertices $(2i,j,0)$ and $(2i,j,1)$ to the two vertices $(2i+1,j,0)$ and $(2i+1, j, 1)$ if $j$ is even, to the two vertices $(2i-1,j,0)$ and $(2i-1, j, 1)$ if $j$ is odd.

	The paper \cite{LRcomb} shows that this is a suitable LR structure for all $m$ and $n$, and that the symmetry group of it and of its partial line graph, can have arbitrarily large vertex-stabilizers.\\

\subsection{Rows and Columns}
	
The LR structure  ${\rm RC}(n, k))$ has as vertices all ordered pairs $(i, (r, j))$ and $((i, r), j)$,
 where $i$ and $j$ are in $\ZZ_n$, and $r$ is in $\ZZ_k$, where $k$ and $n$are integers at least 3.
 Green edges join $(i, (r, j))$ to $(i\pm 1, (r, j))$ and $((i, r), j)$ to $((i, r), j\pm 1)$,
 while red edges join $(i, (r, j))$ to $((i, r \pm 1), j)$ and so $((i, r), j)$ to $(i, (r \pm 1, j))$.\\

This structure is referred to in both \cite{LRalg} and \cite{LRcomb}.

\subsection{Cayley constructions}
	Suppose a group $A$ is generated by two sets,  $R$ and $G$,  of size two, neither containing the identity, and each containing the inverse of each of its elements.  Then we let $\Cay(A;R, G)$ be the structure whose vertex set is $A$, whose red edges join each $a$ to $sa$ for $s\in R$ and whose green edges join each $a$ to $sa$ for $s\in G$.  The paper \cite{LRalg} shows that if $A$ admits two automorphisms, one fixing each element of $R$ but interchanging the two element of $G$ and the other vice versa, then $\Cay(A;R, G)$ is an LR structure.  The condition $RG \neq GR$ is equivalent to the structure not having alternating 4-cycles.
	
	The most frequently occuring examples in the case where $A$ is the dihedral group $D_n$.  One family of this type is the first group of bicirculants $BC_n(\{0, 1-r\}, \{1, s\})$ shown in in subsection \ref{ss:BC}.

	The paper \cite{LRalg} shows several other algebraically defined structures.  First, there are examples for the group $D_n$ where the swappers do not arise from group automorphisms.  Second, there is a Cayley construction for the structure $RC(n,k)$ of the previous subsection.

And the body of the paper shows six  'linear' constructions in which $A$ is an extension of some $\ZZ_n^k$:

\begin{construction}
For $n$ and $k$ both at least 3, let $A$ be a semidirect product of $\ZZ_n^k$ with the group generated by the permutation $\sigma= (1  2  3  \dots  k-1   k)$ acting on the coordinates.  Let $e_1$ be the standard basis element $(1   0  0 \dots  0)$,  let $R = \{e_1, -e_1\}$ and $G = \{\sigma, \sigma^{-1}\}$.  We define the LR structure $\AffLR(n, k)$ to be $\Cay(A; R, G)$.
\end{construction}

\begin{construction}
Let $\ProjLR(k,n)$ be $\AffLR(k,n)$ factored out by the cyclic group generated by $(1, 1, \dots, 1)$.
\end{construction}

\begin{construction}
Let $\ProjLR^{\circ}(2k,n)$ be $\AffLR(2k,n)$ factored out by the group generated by all $d_i = e_i-e_{i+k}$, where $e_i$ is the standard basis element having a 1 in position $i$ and zeroes elsewhere.
\end{construction}

\begin{construction}
Let $A$ be a semidirect product of $\ZZ_2^{2k}$ with the group generated by the permutation $\gamma = (1, 2, 3, \dots, k)(k+1, k+2, \dots, 2k)$. acting on the coordinates.  Let $R = \{e_1, e_{k+1}\}$ and $G = \{\gamma, \gamma^{-1}\}$.  We define the LR structure $\AffLR_2(k)$ to be $\Cay(A; R, G)$.
\end{construction}

 Let $d_1$ be the $2k$-tuple in which the first $k$ 
entries are 1 and the last $k$ entries are 0; let $d_2$ be the $2k$-tuple in which the
first $k$ entries are 0 and the last $k$ entries are 1; let $d$ =  $d_1+d_2$. 

\begin{construction}
Let $\ProjLR'(k)$ be $\AffLR_2(k)$ factored out by the  group generated by $d_1$ and $d_2$.
\end{construction}

\begin{construction}
Let $\ProjLR"(k)$ be $\AffLR_2(k)$ factored out by the  group generated by $d$.
\end{construction}

The paper \cite{LRalg} proves that all six of these constructions lead to suitable LR structures (except for a few cases).

\section{Base Graph-Connection Graph}
\label{sc:BGCG}
	Let $B$ be any tetravalent graph, and define $B^*$ to be the graph formed from $B$ by replacing every edge with a path of length 2.  Think of the tetravalent vertices of $B^*$ as being black and the degree-2 vertices as white.  Suppose that $\Gamma$ is a tetravalent graph whose edges can be partitioned into subgraphs each isomorphic to $B^*$; call these subgraphs 'blocks'.  Define a graph $C$ to have one vertex for each block, with an edge connecting any two vertices corresponding to blocks that share a white vertex.  Call $B$ the ``base graph'' and $C$ the connection graph.  We call $\Gamma$ a ``BGCG of $B$ and $C$''.  Many of the graphs in the census are BGCG of some smaller $B$ and some $C$.  In \cite{BGCG}, we give constructions for the cases in which $C$ has at most two vertices.  
These constructions are sumarized in the next paragraph.  Also, a complete construction is known in the case when $C$ is the $n$-cycle $C_n$.   A simple special case  of this construction  is shown in the second paragraph below.  No truly general technique exists at the moment.

	If $B$ is a tetravalent dart-transitive graph, a {\em dart-transitive pairing} $\beta$ of $B$ is a partition of its edges into sets  of size 2 in such a way that the subgroup $G$ of $\Aut(B)$ which preserves the partition is large enough to be transitive on darts.  Given such a $\beta$, let $\kappa$ be the permutation of edges which sends each edge to the other edge of the same color.   Then $\BGCG(B, K_1,\beta)$ is the result of identifying, in one copy of $B^*$ each white vertex corresponding to an edge $e$ with the one corresponding to $e^\kappa$.  Similarly, $\BGCG(B, K_2,\beta)$ is formed from two copies of $B^*$, identifying, in one of the two copies each white vertex corresponding to an edge $e$ with the one corresponding to $e^\kappa$ in the other copy.  There are also some non-involutary permutations $\kappa$  which can be used.  The search for those is not yet implemented.

Now suppose that $\beta$ is a dart-transitive pairing with group $G$ and that there is a partition $\{\R, \G\}$ of the edges into two `colors' ($\R$ = `red', $\G$= `green') which is also invariant under $G$.   Suppose further that each pair in $\beta$ meets both $\R$ and $\G$. Then we can form $\BGCG(B, C_k, \{\beta, \{\R, \G\}\})$ by making copies $B_0, B_1, B_2, \dots, B_{k-1}$ of $B^*$.  For each pair $\{e, f\}$ in $\beta$, we identify the  copy of the green one in $B_i$ with the copy of the red one in $B_{i+1}$.   Alternatively, if $k$ is even, we can form $\BGCG(B, C_k, \{\beta, \{\R, \G\}\}')$ similarly, but identifying green $e$ in $B_i$ with $e$ in $B_{i+1}$ when $i$ is even and red $f$ in $B_i$ with $f$ in $B_{i+1}$ when $i$ is odd.  In each case, if $B$ is dart-transitive, then the constructed graph is edge-transitive, and is often semisymmetric.

\section{From regular maps}
\label{sc:maps}

A {\em map} is an embedding of a graph or multigraph on a compact connected surface such that each component of the complement of the graph (these are called {\em faces}) is topologically a disk.  A {\em symmetry} of a map is a symmetry of the graph which extends to a homeomorphism of the surface.  A map $\M$ is {\em rotary} provided that for some face and some vertex of that face, there is a symmetry $R$ which acts as rotation one step about the face and a symmetry $S$ which acts as rotation one step about the vertex.  A map $\M$ is {\em reflexible} provided that it is rotary and has a symmetry $X$ acting as a reflection fixing that face and vertex. If $\M$ is rotary but not reflexible, we call it {\em chiral}.    See \cite{HW} for more details.

	If $\M$ is rotary, its symmetry group, $\Aut(\M)$, is transitive on faces and on vertices.  Thus all faces have the same number $p$ of sides and all vertices have the same degree $q$.  We then say that $\M$ has {\em type} $\{p, q\}$.

\subsection{Underlying graphs}
	The underlying graph of a rotary map $\M$ is called ${\rm UG}(\M)$ and is always dart-transitive.  If $q = 4$, it belongs in this census.

\subsection{Medial graphs}
	The vertices of the medial graph, ${\rm MG}(\M)$, are the edges of $\M$.  Two are joined by an edge if they are consecutive in some face (and so in some vertex).  If $\M$ is rotary, ${\rm MG}(\M)$ is always edge-transitive.  If $\M$ is reflexible or if it is self dual in one of two ways, then ${\rm MG}(\M)$ is dart-transitive.  If not, it is quite often, but not quite always, $\frac{1}{2}$-transitive.  No one seems to know a good criterion for this distinction.

\subsection{Dart graphs}
	The vertices of the dart graph, $\DG(\M)$, are the darts of $\M$.  Two are joined by an edge if they are head-to-tail consecutive in some face.  The graph $\DG(\M)$ is a twofold cover of ${\rm MG}(\M)$ and is often the medial graph of some larger rotary map. It can be dart-transitive or $\frac{1}{2}$-transitive; again, no good criterion is known.

\subsection{HC  of maps}
	The Hill Capping of a rotary map $\M$ is defined in a way completely analogous to the capping of a cubic graph $\Lambda$:  we join  $\{A_i, B_j\}$ to $\{B_j, C_{1-i}\}$  where $A, B$ and $C$ are vertices which are consecutive around some face.  
The graph $\HC(\M)$ is a 4-fold covering of ${\rm MG}(\M)$ and can be dart-transitive or semisymmetric or $\frac{1}{2}$-transitive or even not edge-transitive.

\subsection{XI of maps}

Suppose that $\M$ is a rotary map of type $\{p, q\}$ for some even $q = 2n$.  Then each corner of the map (formed by two consecutive edges in one face)  is opposite at that vertex to another corner;  we will call such a pair of corners an 'X'.  As an example, consider Figure \ref{Fig:XI}, which shows one vertex, of degree 6, in a map.  The  X's are pairs $a, b, c$ of opposite corners.

\begin{center}
\begin{figure}[hhh]
\begin{center}
\epsfig{file=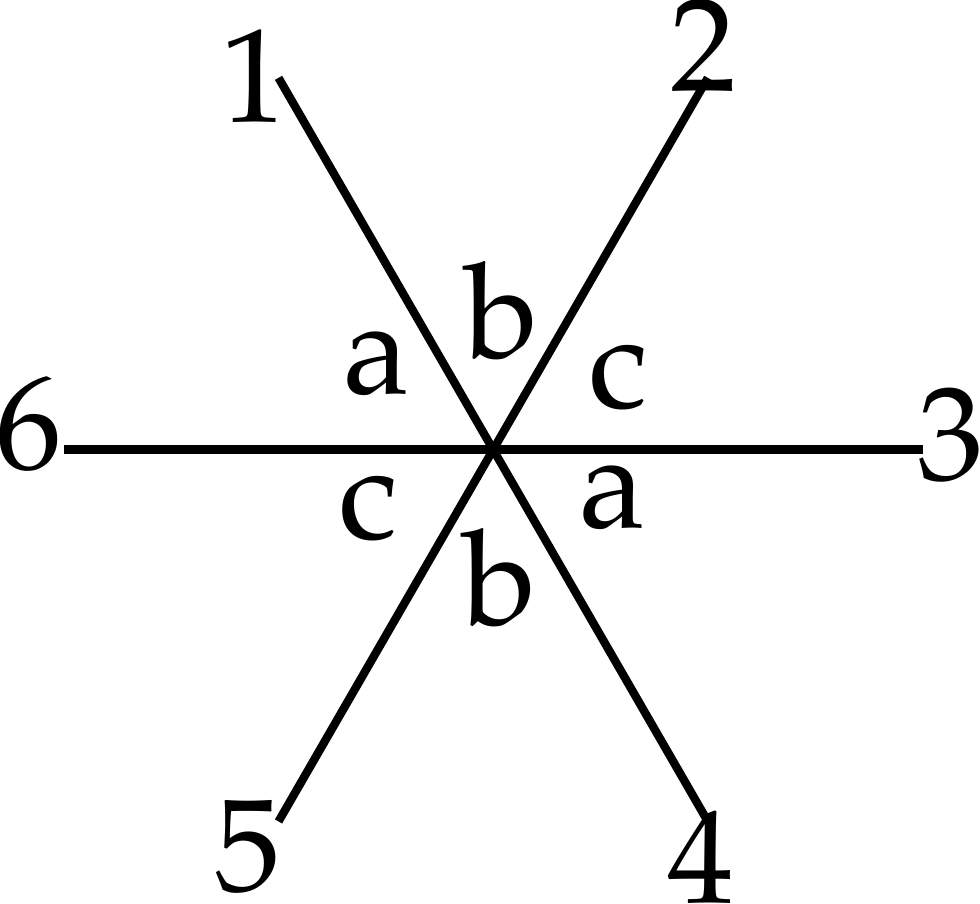,height=35mm}
\caption{X's and I's in a map}
\label{Fig:XI}
\end{center}
\end{figure}
\end{center}

We form the bipartite graph $\XI(\M)$  in this way:  The black vertices are the X's, the white vertices are the edges of $\M$, and edges of $\XI(\M)$ are all pairs $\{x, e\}$ where $x$ is an X and $e$ is one of the four edges of $x$.  Continuing our example, the black vertex $a$ is adjacent to white vertices 1, 3, 4, 6, while $b$ is adjacent to 1, 2, 4, 5 and $c$ to 2, 3, 5, 6.

	It is interesting to see this construction as a special case of two previous constructions.  First it is made from $\MG(\M)$ using a $\BGCG$ construction in which each edge of $\MG(\M)$ is paired with the one opposite it at the vertex of $\M$ containing the corresponding corner.

	Secondly, it is $\PP$ of an LR structure called a {\em locally dihedral cycle structure} as outlined at the end of \cite{LRcomb}.

  It is clear that if $\M$ is reflexible, then $\XI(\M)$ is edge-transitive; it is surprising, though, that sometimes (criteria still unknown)  $\XI$ of a chiral map can also be edge-transitive.

\section{Sporadic graphs}
\label{sc:Sporadic}

There are a few graphs in the Census which are given familiar names rather than a parametric form.  These are:  $K_5 = C_5(1,2)$, the Octahedron = $K_{2, 2, 2}$.  Also there is the graph ${\rm Odd}(4)$;  its vertices are subsets of $\{1, 2, 3, 4, 5, 6, 7\}$ of size 3.  Two are joned by an edge when the sets are disjoint.  Finally, there is the graph denoted ${\rm Gray}(4)$ due to a construction by Bouwer \cite{B} which generalizes the Gray graph to make a semisymmetric graph of valence $n$ on $2n^n$ vertices, in this case, 512.  In fact we wanted to extend the Census to 512 vertices in order to include this graph.

Out of more than 7000 graphs in this Census, 400 of them have as their listed names one of the tags $\AT[n, i], \HT[n,i]$ or $\SSS[n,i]$ from the computer-generated censi.  These graphs, then, must have no other known constructions.    Each of these might be truly sporadic or, perhaps, might belong to some interesting family not yet recognized.

	Each of these is a research project in its own right, a single example  waiting to be meaningfully generalized.

\section{Open questions}
\label{sc:OQ}
{\begin{enumerate}
\item 
In compiling the Census, whenever we wanted to include a parameterized family ($C_n(1,a)$ (section \ref{sc:Circulants}), $\BC_n(a, b, c, d)$ (section \ref{sc:Bicirculants}), $\Pr_n(a, b, c, d)$ (section \ref{sc:Propellors}), etc. ), it was very helpful to have some established theorems which either completely classified which values of the paraeters gave edge-transitive graphs or restricted those parameters in some way.  In families for which no such  theorems were known, we were forced into brute-force searches, trying all possible values of the parameters.  In many cases this caused our computers to run out of time or space before finishing the search.  There are many families for which no such results or only partial results exist.   So, our first and most pressing question is :\\For which values of parameters are the following graphs edge-transitive (or LR): 

$\AMC(k,n,M)$ (section \ref{sc:Attebery}), \\
$\MSZ(m,n;k, r)$ (section \ref{MSZ}), \\
$\MC3(m,n,a,b,r,t,c))$ (section \ref{MC3}),\\ 
$\LoPr_n(a, b, c,d, e$ (section \ref{sc:OD}), \\
$\WH_n(a, b, c, d)$ (section \ref{sc:OD}), \\
$\KE_n(a, b, c, d, e)$ (section \ref{sc:OD}), \\
$\Curtain_n(a, b, c, d, e)$ (section \ref{sc:OD}),\\ 
$\CPM(n, s, t, r)$ (section \ref{sc:GP}).

\item  The general class IV metacirculants (of which the graphs $\MSZ$ are merely a part) is still largely unexplored.  First we need to decide how the class is to be parameterized.  Then the same questions as above are relevant: When are these graphs dart-transitive? $\half$-arc-transitive?  LR structures?

\item Given $a$ and $n$ with $a^2\equiv\pm 1$ (mod $n$), which toroidal graph is isomorphic to $\C_n(1, a)$? See sections \ref{sc:Circulants} and \ref{sc:Toroidal Graphs}.  The toroidal graphs are very common and almost every family includes some as special cases.  In researching a family, we often point out that certain values of the parameters give toroidal graphs and so will not be studied with this family.  However, it is often difficult to say exactly which toroidal graph is given by the indicated parameters.  This is simply the first of many such questions.

\item  Under what conditions on their parameters can two spidergraphs be isomorphic?
This is a question mentioned in \cite{STG}.  Many examples of isomorphism theorems are given there as well as examples which show that not all isomorphisms have been found.

\item The Attebery construction presents many challenges.  First of these is the question of isomorphism:  How can $\AMC(k,n,M)$ be isomorphic to $\AMC(k', n', M')$?
This is an even more urgent question, as there are many sets of parameters which give any one isomorphism class of graphs.
 
\item Some graphs with the same diagrams as the metacirculants $\PS, \MPS, \MSY,  \MSZ, \MC3$ are not themselves metacirculants but are nevertheless edge-transitive.    For example consider the graph $\KE_{12}(1, 3, 8, 5, 1)$  It is isomorphic to the graph whose vertices are $\ZZ_4\times\ZZ_{12}$, with each $(1, i)$ adjacent to $(2, i)$ and $(2, i+1)$, each $(2, i)$  to $(3, i)$ and $(3, i+4)$, each $(3, i)$  to $(4, i+4)$ and $(4, i+5)$,   and each $(4, i)$  to $(1, i)$ and $(1, i+10)$.  This diagram is the same `sausage graph' that characterizes the $\PS$ and $\MPS$ graphs and yet the graph is not isomorphic to any $\PS$ or $\MPS$ graph.
This happens rarely enough that the exceptional cases might be classifiable.

\item How many non-isomorphic semitransitive orientations for the graphs with large vertex stabilizers, i.e., the Praeger-Xu graphs, including $\W(n, 2)$  and  $ \RW_{2m}(m+2, m+1)$?  More generally, these graphs present many problems computationally.  Can we determine without computers their cycle structures, useful colorings for BGCG constructions, and other properties?

\item  When do the constructions DG, HC, TAG, applied to some cubic graph $\Lambda$ or some rotary map $\M$, simply result in the line graph or medial graph of some larger  graph or map?

\item The BGCG constructions we have used here are only the beginning of this topic.   We have some constructions for cases where the connection graph is $K_1, K_2$, or $C_k$, but for other connection graphs, we have no general techniques at all.

\item  How can $\XI$ of a chiral map be edge-transitive?

\item The 3-arc graph of a cubic graph (see section \ref{ssc:TAG}) is the partial line graph of some cycle decomposition;  {\em what} decomposition?

\item In section \ref{sc:SomeLR}, we use cycle structures to construct LR structures.  What is an efficient way to find all isomorphism classes of cycles structures for a given dart-transitive tetravalent graph?

\end{enumerate}

\end{document}